\newtheorem{theorem}{Theorem}
\newtheorem{lemma}{Lemma}
\title{Adaptive generalized multiscale finite element methods for H(curl)-elliptic problems
with heterogeneous coefficients}
\author{
Eric T. Chung\thanks{Department of Mathematics,
The Chinese University of Hong Kong (CUHK), Hong Kong SAR. Email: {\tt tschung@math.cuhk.edu.hk}.
The research of Eric Chung is supported by Hong Kong RGC General Research Fund (Project 14317516)
and CUHK Direct Grant for Research 2016-17.}
\and and 
\and
Yanbo Li\thanks{Department of Mathematics, Texas A\&M University, College Station, TX 77843. Email: {\tt lyb@tamu.edu}.}
}
\begin{document}
\maketitle{}
\begin{abstract}
In this paper, we construct an adaptive multiscale method for solving H(curl)-elliptic problems in highly heterogeneous media. Our method is based on the generalized multiscale finite element method. We will first construct a suitable snapshot space, and a dimensional reduction procedure to identify important modes of the solution. We next develop and analyze an a posteriori error indicator, and the corresponding adaptive algorithm. In addition, we will construct a coupled offline-online adaptive algorithm, which provides an adaptive strategy to the selection of offline and online basis functions. Our theory shows that the convergence is robust with respect to the heterogeneities and contrast of the media. We present several numerical results to illustrate the performance of our method. 
\end{abstract}

\section{Introduction}
Many practical problems are modeled by partial differential equations with highly heterogeneous coefficients. Classical numerical methods for solving these problems typically require very fine computational meshes, and are therefore very expensive to use. In order to solve these problems efficiently, one needs some types of model reduction, which is typically based on upscaling techniques or multiscale methods. In upscaling methods, the heterogeneous coefficient is carefully replaced by an effective medium \cite{durlofsky1991numerical, wu2002analysis,numerical-homo,2d-waves} so that the system can be solved on a much coarser grid. In multiscale methods, such as those in \cite{arbogast2004analysis, chu2010new, engquist2013heterogeneous, efendiev2011multiscale, efendiev2009multiscale, efendiev2004multiscale, ghommem2013mode, chung2014generalized, chung2013sub,GMsFEM-elastic,elastic-jcp,aarnes04,jennylt03}, one attempts to represent the solution by some multiscale basis functions. These basis functions are constructed carefully and are usually based on some local cell problems. The purpose is to capture the fine scale properties of the true solution by using a few multiscale basis functions, with the aim of reducing computational costs.


In this paper, we consider the H(curl)-elliptic problem with highly heterogeneous coefficients. Our aim is to construct a multiscale method for solving this problem. We will consider the generalized multiscale finite element method (GMsFEM) \cite{efendiev2013generalized,chunghou2016adaptive}. GMsFEM is a generalization of the classical multiscale finite element method \cite{hou1997multiscale} in the way that multiple basis functions are used for each coarse region. We will consider three important components of the GMsFEM in this paper. The first one is basis functions construction. This is a process in the offline stage. To find the basis functions, we will construct a set of snapshot functions for each local coarse region. The snapshot functions are solutions of local cell problems with some suitable boundary conditions. To obtain the offline basis functions, we perform a dimension reduction procedure by using a suitable spectral problem, designed carefully based on analysis. These basis functions are then used in a coarse scale conforming finite element formulation to solve the problem. The second component is offline adaptivity \cite{chung2014adaptive}. In order to determine the number of offline basis functions to be used for each coarse region, we will develop a local error indicator based on an a posteriori error analysis. Using the proposed error indicator, we are able to determine the number of basis functions in an adaptive way. In addition, we prove the convergence of this approach, and show that the convergence rate is independent of the heterogeneities of the coefficients. The last component is online adaptivity \cite{chung2015residual}. The goal of online basis functions is to capture some components, such as global feature, of the solution that are not representable by offline basis functions. To compute online basis functions, we solve local cell problems by using local residual of the solution. Moreover, we can do this in an adaptive way, so that online basis functions are only added in regions with larger errors. We prove the convergence of the online adaptive method and show that the convergence rate is independent of the coefficients. We also show that a sufficient number of offline basis functions is needed in order to obtain a rapid convergence rate of the online adaptive method. 
We remark that there are also related methods developed for the discontinuous Galerkin formulation in \cite{chung2014GMsDGM} and \cite{chung2015online}.
We also remark that a method based on HMM is developed in \cite{henning2016new}.

To illustrate the performance of our GMsFEM, we present some numerical results focusing on the convergence properties of the method. We will first show that the method is robust with respect to the contrast and heterogeneities of the coefficients. Next, we illustrate the advantage of using offline adaptivity by comparing the convergence behaviour with uniform basis enrichment, and show that the offline adaptive method is able to capture the solution more effectively. Finally, we construct a coupled offline-online adaptive method. It is known that the first few offline basis functions correspond to the dominant components of the solution, and the rest of the offline basis functions contribute the solution is a less crucial way. So, one needs to switch to the use of online basis functions once sufficient number of offline basis functions are used. Our offline-online adaptive method allows this to be done automatically. By using a suitable error indicator and a suitable tolerance parameter, we show that the offline-online adaptive method performs very well and give a practical solver for realistic applications.

The rest of the paper is organized as follows. In the next section, we briefly introduce the basic idea of the GMsFEM. In Section \ref{sec:adaptive}, we will present both the offline and the online adaptive methods, and in Section \ref{sec:analysis}, we will analyze these methods. In Section \ref{sec:num}, numerical results are presented to illustrate the performance of the adaptive methods. Finally, the paper ends with a conclusion.

\section{The GMsFEM}
\label{sec:method}

In this section, 
we will give the construction of our 
GMsFEM for $H(\text{curl})$-elliptic problem. 
First, we present some basic notations and the coarse grid formulation
in Section \ref{sec:notation}.
Then, we present the constructions of the multiscale snapshot functions,
basis functions and the multiscale scheme in Section \ref{sec:basis}.
We will mainly present our ideas in the two-dimensional settings. The extension to the three-dimensional case is straightforward.

\subsection{Preliminaries}\label{sec:notation}
Let $D$ be a bounded domain in $\mathbb{R}^2$ with a Lipschitz boundary $\partial D$ with unit tangential vector $t$. In this paper, we consider the following high-contrast $H(\text{curl})$-elliptic problem

\begin{equation}\label{main}
\begin{split}
\nabla \times (a\,\nabla\times u) +b\,u & =f\quad\quad \text{ in }D,\\
u\cdot t &=0\quad\quad \text{ on } \partial D,
\end{split}
\end{equation}
where $a\geq 1$ is a heterogeneous field with high contrast, $b >0$ is a bounded heterogeneous field and $f$ is a given divergence-free source.

To describe the general solution framework for the model problem \eqref{main}, we first
introduce the notion of fine and coarse grids. Let $\mathcal{T}^h$ be a partition
of the domain $D$ into fine finite elements. 
Here $h>0$ is the fine mesh size. 
The coarse partition,
$\mathcal{T}^H$
of the domain $D$, is formed such that
each element in $\mathcal{T}^H$ is a connected 
union of fine-grid blocks.
More precisely,
$\forall K_j \in \mathcal{T}^H,\;K_j=\cup_{F\in I_j}F$ 
for some $I_j\subset \mathcal{T}^h$. 
The quantity $H>0$ is the coarse mesh size. 
We will consider rectangular coarse elements and the methodology 
can be used with general coarse elements. An illustration of the mesh notations is shown in Figure \ref{partition}(Left).

\begin{figure}[ht]
	\begin{minipage}[t]{0.6\textwidth}
		\centering
		\includegraphics[width=3in]{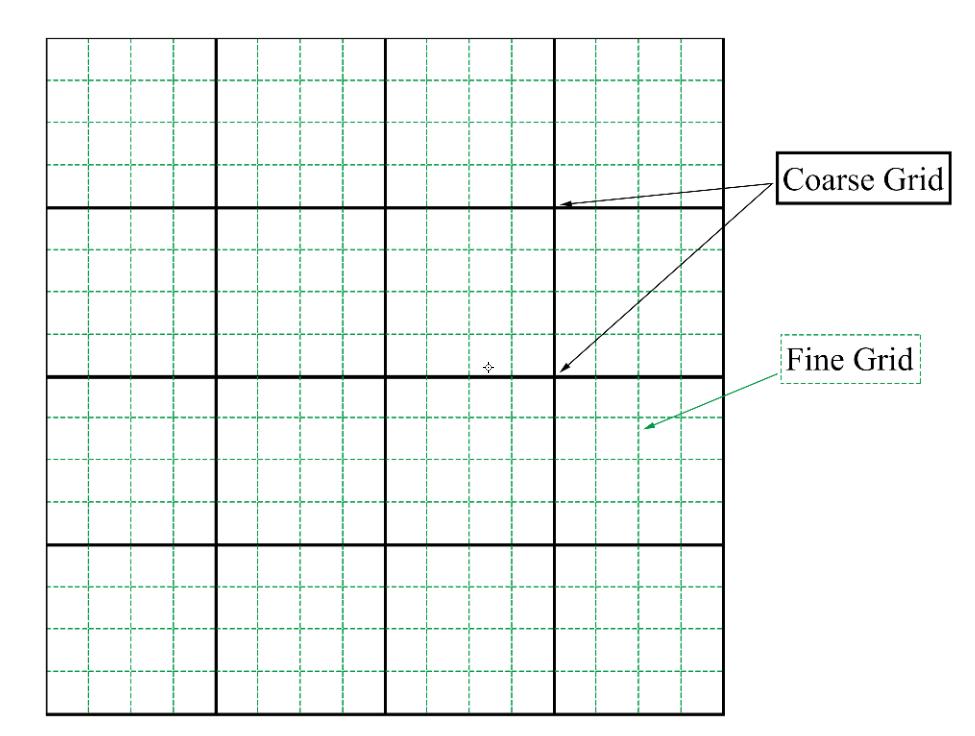}
	\end{minipage}%
	\begin{minipage}[t]{0.3\textwidth}
		\centering
		\includegraphics[width=2.4in]{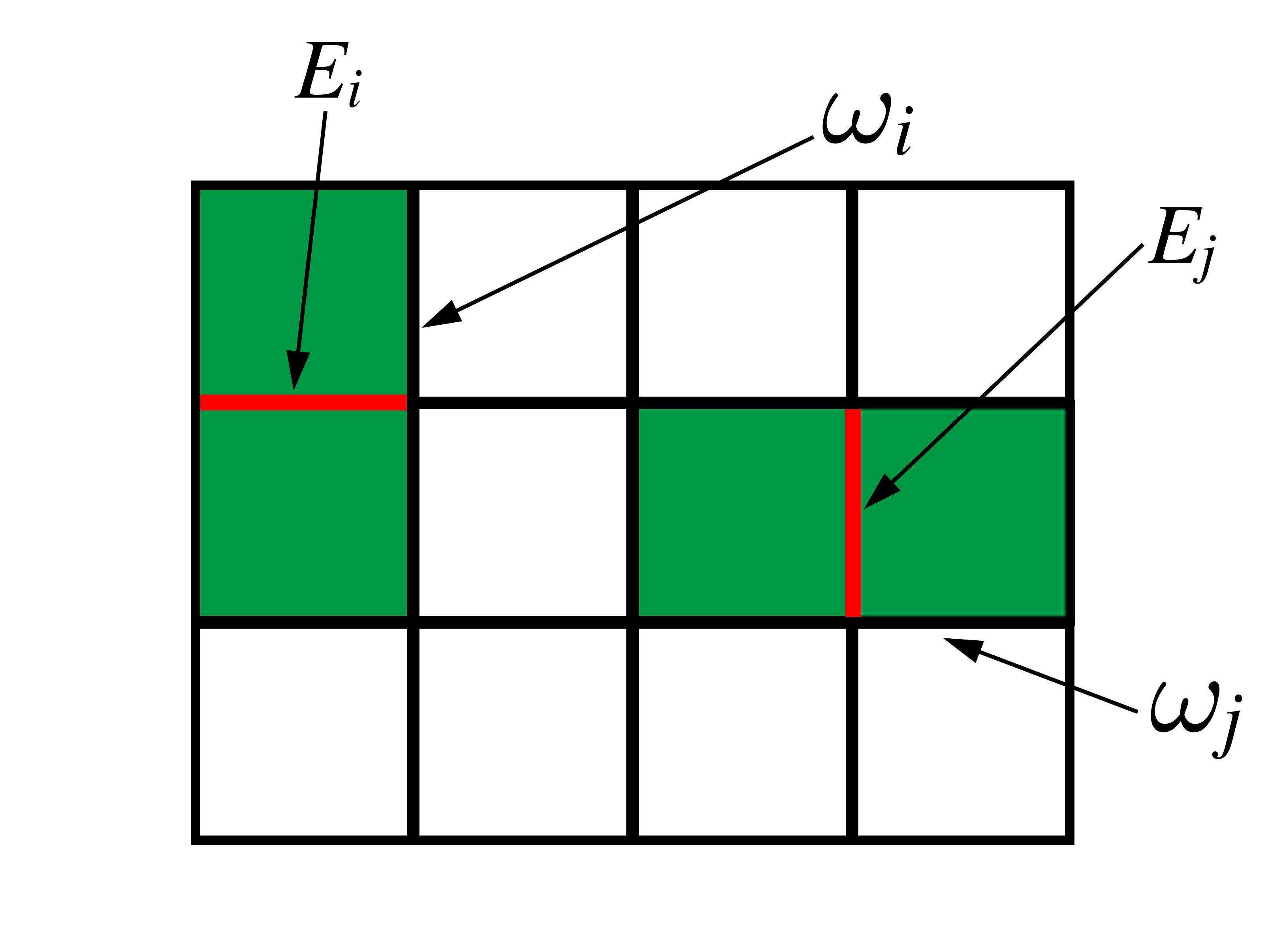}
	\end{minipage}
	\caption{Left: an illustration of fine and coarse grids. Right: an illustration of a coarse neighborhood and a coarse element.}
	\label{partition}
\end{figure}

Next, we define the finite element space $V_\text{h}$ as the set of the lowest order curl conforming elements of N\'{e}d\'{e}lec with respect to the fine mesh $\mathcal{T}^h$,
and define  $V_\text{h}^0=\left\lbrace  v \in V_\text{h}\; |\; v\cdot t=0 \text{ on } \partial D \right\rbrace $.
The fine-scale solution $u_\text{h}\in V_\text{h}$ is obtained by solving the following variational problem

\begin{equation}
\int_D \Big( a \,\left( \nabla\times u_\text{h} \right)  (\nabla\times v)+b\,u_\text{h} \cdot v \Big) =\int_D f\cdot v  \quad \forall v \in V_\text{h}^0.
\label{eq:fine}
\end{equation}
The solution $u_{\text{h}}$ is our reference solution. The convergence property of this method is well-known (see for example \cite{nedelec1980mixed}).

Finally, for any subdomain $\Omega\subset D$, and $v\in V_\text{h}$, we define the norms $\left\|v \right\|_{L^2(b;\Omega)}$ and $\left\|v \right\|_{H(\text{curl})(a,b;\Omega)}$ as
$$\left\|v \right\|_{L^2(b;\Omega)}^2=\int_\Omega b\,|v|^2$$
and
$$\left\|v \right\|_{H(\text{curl})(a,b;\Omega)}^2=\int_\Omega a \, |\nabla\times v|^2+b\,|v|^2.$$

\subsection{Construction of multiscale basis functions}\label{sec:basis}

In this section, we will give the constructions of our GMsFEM. 
In Section \ref{sec:snap}, we will present the construction of the snapshot space. 
To do so, we will locally solve the $H(\text{curl})$-elliptic problem on coarse neighborhoods with suitable
boundary conditions. This process will provide a set of functions which
are able to span the fine-scale solution with high accuracy.  
Next, in Section \ref{sec:off}, we will present the construction of our multiscale basis functions. 
The construction is based on the design of a suitable local spectral problem which can identify important modes
in the snapshot space. Finally, we present our multiscale method.

\subsubsection{Snapshot Space}\label{sec:snap}

We denote the set of all edges of the coarse grid as $\mathcal{E}^H$, and let $N_e$ be the total number of interior edges
of the coarse grid. We define the coarse grid neighborhood $\omega_i$ of an edge $E_i\in \mathcal{E}^H$ as
$$\omega_i=\bigcup\left\lbrace K\in \mathcal{T}^H:\;E_i\in\partial K\right\rbrace $$
which is the union of all coarse grid blocks having the edge $E_i$. This concept is illustrated in Figure~\ref{partition} (Right).

In each $\omega_i$ corresponding to an interior coarse edge $E_i$, we will solve the following local problem 
\begin{equation}\label{local problem}
\begin{split}
\nabla \times(a \, \nabla \times \psi_j^{(i)} )+b\,\psi_j^{(i)}&=0, \quad\quad  \text{ in each element }\; K \subset \omega_i, \\ 
\psi_j^{(i)} \cdot t&=0,  \quad\quad \text{ on } \; \partial \omega_i, \\ 
\psi_j^{(i)} \cdot t&=\delta_j^{(i)},  \,\quad \text{ on } \; \partial E_i.
\end{split}
\end{equation}
In the above problem,
we write $E_i=\bigcup_{j=1}^{J_i}e_j $, where $e_j$'s are the fine grid edges contained in $E_i$, and we define
$$\delta_{j}^{(i)}=\left\{\begin{matrix}
1, & \text{on} \; e_j,\\ 
0, & \text{on} \; E_i\backslash e_j.
\end{matrix}\right.$$
The set of solutions to problem \eqref{local problem} is the local snapshot basis $\beta_{\text{snap}}^{(i)}$.
The local snapshot space $V_{\text{snap}}^{(i)}$ corresponding to the coarse neighborhood $\omega_i$ is
defined as the span of all the above functions, that is, $V_{\text{snap}}^{(i)}=\text{span}(\beta_{\text{snap}}^{(i)})$.
The global snapshot space, or simply the snapshot space, is defined as $V_{\text{snap}}=\bigoplus _{i=1}^{N_e}V_{\text{snap}}^{(i)}$.
After we construct $V_{\text{snap}}$, we can solve snapshot solution $u_{\text{snap}}\in V_{\text{snap}}$ by solving
\begin{align}\label{solving u_snap}
\int_D \Big( a \,(\nabla\times u_{\text{snap}}) (\nabla\times v)+b\,u_{\text{snap}} \cdot v \Big) =\int_D f\cdot v,  \quad \forall v \in V_{\text{snap}}.
\end{align}

\subsubsection{Snapshot error}

In this section, we will show that the difference between the fine scale solution $u_{\text{h}}$
and the snapshot solution $u_{\text{snap}}$ is $O(H)$. 

\begin{theorem}\label{snap_error}
Let $u_{\text{h}} \in V_{\text{h}}$ be the solution of (\ref{eq:fine}) and let $u_{\text{snap}}\in V_{\text{snap}}$ be the solution of (\ref{solving u_snap}). Then we have 
	$$\left \| u_\text{h}-u_\text{snap} \right \|_{H(\text{curl})(a,b;D)}\leq  CH \left\| f\right\| _{L^2(D)},$$
	where $C$ is independent of $a$ and $b$.
\end{theorem}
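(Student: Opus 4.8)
The plan is to exploit the Galerkin orthogonality afforded by \eqref{solving u_snap}: since $V_{\text{snap}} \subset V_\text{h}^0$, we have
$$\left\| u_\text{h} - u_\text{snap} \right\|_{H(\text{curl})(a,b;D)} = \inf_{v \in V_{\text{snap}}} \left\| u_\text{h} - v \right\|_{H(\text{curl})(a,b;D)},$$
so it suffices to construct a single good approximant $v \in V_{\text{snap}}$ and estimate $\left\| u_\text{h} - v \right\|_{H(\text{curl})(a,b;D)}$. The natural candidate is built locally: on each coarse neighborhood $\omega_i$, the snapshot functions $\psi_j^{(i)}$ span exactly the set of functions that are $H(\text{curl})(a,b)$-harmonic inside each coarse element $K \subset \omega_i$ and that can carry arbitrary prescribed fine-grid tangential traces on $E_i$ (vanishing on $\partial\omega_i$). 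Using a partition of unity subordinate to the coarse edges — that is, writing $u_\text{h} = \sum_i (\text{localization of } u_\text{h} \text{ near } E_i)$ — one decomposes $u_\text{h}$ into pieces whose tangential trace is supported near individual coarse edges, and each such piece can be matched (on the edge trace) by an element of $V_{\text{snap}}^{(i)}$.

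The key steps, in order: (1) Introduce an interpolation/localization operator that, given $u_\text{h}$, produces on each $\omega_i$ a snapshot function $v_i \in V_{\text{snap}}^{(i)}$ agreeing with $u_\text{h}$ in tangential trace along $E_i$; set $v = \sum_i v_i \in V_{\text{snap}}$, or rather glue these together compatibly so that $v \cdot t = u_\text{h}\cdot t$ on every coarse edge. (2) On each coarse element $K$, since both $u_\text{h}$ and $v$ have the same tangential traces on $\partial K$ and $v$ is the $a$-$b$-harmonic extension (energy-minimizing extension), bound $\left\| u_\text{h} - v \right\|_{H(\text{curl})(a,b;K)}$ by the energy of the difference, which is controlled because $v$ is energy-minimizing with the right boundary data; the remaining estimate reduces to bounding the $H(\text{curl})$-norm of $u_\text{h} - \widetilde{u}$, where $\widetilde u$ is any convenient extension. (3) Invoke a scaled trace/Poincaré-type inequality on the coarse element $K$ of diameter $H$ to get the factor $H$: something of the form $\left\| w \right\|_{H(\text{curl})(a,b;K)} \le C H \left\| f \right\|_{L^2(K)}$ for the appropriate residual quantity $w$, using that $u_\text{h}$ solves \eqref{eq:fine} locally with right-hand side $f$. (4) Sum over all coarse elements/neighborhoods, using the finite overlap of the $\omega_i$, to get the global bound $CH\left\| f \right\|_{L^2(D)}$, with $C$ independent of $a,b$ because all the inequalities used are either energy-minimization statements (coefficient-independent by construction) or geometric inequalities on $K$ whose constants depend only on the shape and size of $K$.

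The main obstacle I expect is step (3): getting a clean $H$-scaling estimate for an $H(\text{curl})$-elliptic problem that is \emph{robust in the contrast of $a$}. The difficulty is that the Poincaré/trace constants for $H(\text{curl})$ spaces are more delicate than in the $H^1$ scalar case — one cannot simply quote a scalar Poincaré inequality — and one must be careful that the high contrast of $a$ does not enter the constant. The right move is probably to work with the quantity $u_\text{h} - u_\text{snap}$ directly in the energy norm, test the local weak formulation with it, and absorb the $a$-weighted curl term, so that the estimate is driven by the $b\,|u|^2$ term and the source $f$; since $f$ is divergence-free and $b$ is bounded, one extracts $H$ from a Friedrichs-type inequality applied to the part of the error that lives in the kernel-complement. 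A secondary technical point is verifying that the locally-defined snapshot approximants can be assembled into a \emph{globally conforming} function in $V_\text{h}^0$ with matching tangential traces across coarse edges; this should follow from the construction of $\delta_j^{(i)}$ and the fact that tangential-trace continuity across a coarse edge is exactly what N\'ed\'elec conformity enforces, but it requires care in how the per-edge pieces are combined.
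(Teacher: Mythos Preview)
Your proposal is correct and follows essentially the same route as the paper: build $\widehat{u_{\text{snap}}}\in V_{\text{snap}}$ matching $u_\text{h}\cdot t$ on every coarse edge (no partition of unity is needed, since each $\psi_j^{(i)}$ has zero tangential trace on all coarse edges except $E_i$), then on each coarse block $K$ the difference $u_\text{h}-\widehat{u_{\text{snap}}}$ solves the local problem with source $f$ and homogeneous tangential data, and Cea's lemma finishes. Your anticipated ``main obstacle'' in step~(3) is resolved exactly as you guess: because $f$ is divergence-free, $u_\text{h}-\widehat{u_{\text{snap}}}$ is \emph{discrete divergence-free} in $K$, so a Poincar\'e--Friedrichs inequality for such fields (Lemma~7.20 in Monk) gives $\|w\|_{L^2(b;K)}\le cH\,|w|_{H(\text{curl})(a;K)}$ with $c$ independent of $a$ (since $a\ge 1$), which combined with the energy identity yields the $H\|f\|_{L^2(K)}$ bound.
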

\begin{proof}
	We choose $\widehat{u_\text{snap}}\in V_\text{snap}$ such that $\widehat{u_\text{snap}}\cdot t=u_\text{h}\cdot t$ on $\mathcal{E}^H$. In particular, in each coarse block $K$, the following equations hold
	\begin{equation*}
	\begin{split}
	\nabla \times\left( a \, \nabla \times \left( u_\text{h}-\widehat{u_\text{snap}}\right)\right) +b\,\left( u_\text{h}-\widehat{u_\text{snap}}\right) =&f,  \quad\quad \text{in}\; K ,\\ 
	\left( u_\text{h}-\widehat{u_\text{snap}} \right) \cdot t=&0, \quad\quad  \text{on} \; \partial K.
	\end{split}
	\end{equation*}
	The corresponding variational problem is 
	\begin{align}
	\int_K a \,\left( \nabla\times \left( u_\text{h}-\widehat{u_\text{snap}}\right)\right)  (\nabla\times v)+b\,\left( u_\text{h}-\widehat{u_\text{snap}}\right) \cdot v=\int_K f\cdot v  \quad \forall v \in V_\text{h}^0(K),
	\label{eq:local}
	\end{align}
	where $V_\text{h}^0(K)=\left\lbrace  v \text{ is N\'{e}d\'{e}lec elements in } K\; |\; v\cdot t=0 \text{ on } \partial K \right\rbrace .$
	Taking $v=u_\text{h}-\widehat{u_\text{snap}}\in V_\text{h}^0(K)$ in the above equation, we have
	\begin{align}\label{0}
	\begin{split}
	\left \| u_\text{h}-\widehat{u_\text{snap}} \right \|_{H(\text{curl})(a,b;K)}^2=&\int_K f\;\left( u_\text{h}-\widehat{u_\text{snap}}\right) \\
	\leq&\left\| f\right\| _{L^2(K)}\left\| u_\text{h}-\widehat{u_\text{snap}}\right\| _{L^2(K)}.
	\end{split}
	\end{align}
	Moreover, for any $p\in Q_\text{h}^0(K)$, where $Q_\text{h}^0$ represents the space of piecewise bilinear functions in fine grid with zero boundary condition, we take $v=\nabla p \in V_{\text{h}}^0(K)$ in (\ref{eq:local}). Since $f$ is divergence-free, we have $$\int_K b\,\left( u_\text{h}-\widehat{u_\text{snap}} \right) \cdot \nabla p=0.$$
	Therefore $u_\text{h}-\widehat{u_\text{snap}}$ is discrete divergence-free.
	
	By Lemma 7.20 in \cite{monk2003finite}, we have
	$$\left\|u_\text{h}-\widehat{u_\text{snap}} \right\|_{L^2(b;K)} ^2\leq c\left|u_\text{h}-\widehat{u_\text{snap}} \right|_{H(\text{curl})(K)}^2\leq c\left|u_\text{h}-\widehat{u_\text{snap}} \right|_{H(\text{curl})(a;K)}^2,$$
	where the semi-norm $\left|v \right|_{H(\text{curl})(a;K)}$ is defined as
	$$\left|v \right|_{H(\text{curl})(a;K)}^2=\int_K a \, |\nabla\times v|^2.$$
	Note that $c$ is independent of $a$.

	By rescaling, we have
	$$\left\|u_\text{h}-\widehat{u_\text{snap}} \right\|_{L^2(b;K)} ^2\leq c'H^2 \left|u_\text{h}-\widehat{u_\text{snap}} \right|_{H(\text{curl})(a;K)}^2 .$$
	Therefore,
	\begin{align*}
	\left\|u_\text{h}-\widehat{u_\text{snap}} \right\|_{L^2(b;K)} ^2&\leq \frac{c'H^2}{1+c'H^2} \left\| u_\text{h}-\widehat{u_\text{snap}}\right\|_{H(\text{curl})(a,b;K)}^2\\
	&\leq c'H^2 \left\| u_\text{h}-\widehat{u_\text{snap}}\right\|_{H(\text{curl})(a,b;K)}^2.
	\end{align*}
	Combining with \eqref{0}, we have
	$$\left \| u_\text{h}-\widehat{u_\text{snap}} \right \|_{H(\text{curl})(a,b;K)}^2\leq c'H^2 \left\| f\right\| _{L^2(K)}^2.$$
	By Cea's inequality, we have
	\begin{align*}
	\left \| u_\text{h}-u_\text{snap} \right \|_{H(\text{curl})(a,b;K)}^2=&\inf_{v\in \in V_\text{snap}}\left \| u_\text{h}-v \right \|_{H(\text{curl})(a,b;K)}^2 \\
	\leq&\left \| u_\text{h}-\widehat{u_\text{snap}} \right \|_{H(\text{curl})(a,b;K)}^2\\
	\leq & c'H^2 \left\| f\right\| _{L^2(K)}^2
	\end{align*}
	Therefore,
	\begin{align*}
	\left \| u_\text{h}-u_\text{snap} \right \|_{H(\text{curl})(a,b;D)}^2=&\sum_K \left \| u_\text{h}-u_\text{snap} \right \|_{H(\text{curl})(a,b;K)}^2\\
	\leq&\sum_K  c'H^2 \left\| f\right\| _{L^2(K)}^2\\
	=& c'H^2 \left\| f\right\| _{L^2(D)}^2.
	\end{align*}
\end{proof}
\subsubsection{Offline Space}\label{sec:off}

After we obtain the snapshot spaces $V_\text{snap}$, we perform a dimension reduction to get a smaller space. Such
reduced space is called the offline space. The reduction is achieved by solving a local spectral problem on
each coarse grid neighborhood $\omega_i$. The dominant eigenfunctions will be used to form the local basis $\beta_{\text{ms}}^{(i)}$. The
local spectral problem, in each $\omega_i$, is to find real number $\lambda$ and $v\in  V_{\text{snap}}^{(i)}$ such that 
\begin{equation}\label{eq:spectral}
a_i(v,\,w)=\lambda \, s_i(v,\,w),\;\; \forall w \in V_{\text{snap}}^{(i)},
\end{equation}
where 
$$a_i(v,w)=\int_{E_i} b (v \cdot t) (w \cdot t),$$
$$s_i(v,w)=\frac{1}{H}\int_{\omega_i}a\, (\nabla \times v)  (\nabla \times w)+b\,v \cdot w.$$
The term $1/H$ is added so that $a_i$ and $s_i$
have the same scale. Note that this does not alter the eigenfunctions. 
After solving the spectral problem in each $\omega_i$, we arrange the eigenfunctions $\phi_j^{(i)}$'s in ascending order of the corresponding eigenvalues $\lambda_1^{(i)}\leq \lambda_2^{(i)}\leq\, ...\, \leq \lambda_{J_i}^{(i)}$.
We then let $\beta_{\text{ms}}^{(i)}$ be the set of the first $l_i$ eigenfunctions, where  $l_i$ be the number of offline basis functions. And define local offline space as $V_{\text{ms}}^{(i)}=\text{span}(\beta_{\text{ms}}^{(i)})$ and global offline space as $V_{\text{ms}}=\bigoplus_{i=1}^{N_e}V_{\text{ms}}^{(i)}$.
Finally, the GMsFEM is defined as follows. We find
$u_{\text{ms}}\in V_{\text{ms}}$ by solving
\begin{align}\label{solving u_ms}
\int_D \Big( a \,(\nabla\times u_{\text{ms}}) (\nabla\times v)+b\,u_{\text{ms}} \cdot v \Big) =\int_D f\cdot v,  \quad \forall v \in V_{\text{ms}}.
\end{align}

\section{Adaptive selection of basis functions}\label{sec:adaptive}

In our GMsFEM (\ref{solving u_ms}), one needs to choose the number of basis functions
for each coarse neighborhood. An attractive and practical strategy is to do this adaptively. 
In particular, the number of basis functions is determined by some local residuals, which measure
the accuracy of the solution. There are two related concepts, namely offline and online adaptivity. 
In Section \ref{sec:offline}, we will present the offline adaptivity 
and in Section \ref{sec:online}, we will present the online adaptivity.

\subsection{Offline adaptive Method}\label{sec:offline}

In this section, we will introduce an error indicator on each coarse grid neighborhood. Based on this estimator,
we develop an offline adaptive enrichment method to solve equation \eqref{main} iteratively by adding offline basis
functions supported on some coarse grid neighborhoods in each iteration.
We emphasize that all selected basis functions come from the spectral problem (\ref{eq:spectral}). 

Notice that the offline adaptive method is an iterative process. We use the notation $u_{\text{ms}}^m$ to denote the multiscale solution at the $m$-th iteration. 
For each $\omega_i$, we define the local residual operator $R_i^m$ as a linear functional on $V_{\text{snap}}^{(i)}$ by 
$$R_i^m(v)=\int_{\omega_i} a\,(\nabla\times u_{\text{ms}}^m) (\nabla\times v)+b\,u_{\text{ms}}^m\cdot v-f\cdot v,\quad\forall v\in V_{\text{snap}}^{(i)}.$$ 
We take $(\eta_i^m)^2=\left \| R_i^m \right \|^2 (\lambda_{l_i^m+1}^{(i)})^{-1},\;\eta_i^m\geq 0$ as our error indicator, where
$$\left \| R_i^m \right \|=\sup_{v\in V_{\text{snap}}^{(i)}}\frac{|R_i^m(v)|}{\left \| v \right \|_{H(\text{curl})(a,b;\omega_i)}}.$$
 
\textbf{Offline adaptive method:} Fix the number $\theta$ and $\delta_0$ with $0\leq\theta,\,\delta_0<1$.\\
We start with iteration number $m=0$. Fix $initial\;number$ of offline basis functions $l_i^0$ for each $\omega_i$ to form the offline space $V_{\text{ms}}^0$. Then, we go to step 1 below

\begin{enumerate} [Step 1:]
	\item Find the multiscale solution. We solve multiscale solution $u_{\text{ms}}^m \in V_{\text{ms}}^m$ satisfying
	\begin{align}\label{solving u_ms}
	\int_D a\, (\nabla\times u_{\text{ms}}^m) (\nabla\times v)+b\,u_{\text{ms}}^m \cdot v=\int_D f\cdot v  \quad \forall v \in V_{\text{ms}}^m.
	\end{align}
	\item 
	Compute the error indicators. For each coarse grid neighborhood $\omega_i$, we compute the local error indicator $\eta_i^m$ and rearrange the local error indicators in decreasing order $\eta_1^m\geq\eta_2^m\geq\,...\,\geq\eta_{Ne}^m$.
	\item Select the coarse grid neighborhoods where basis enrichment is needed. We take the smallest $k$, such that
	$$\theta \sum_{i=1}^{Ne}(\eta_i^m)^2\leq  \sum_{i=1}^{k}(\eta_i^m)^2.$$
	We will enrich the offline space by adding basis functions which are supported in $\omega_1,\,\omega_2,\,...,\,\omega_k$.
	\item Add basis functions to the space. For those $\omega_i$'s selected from Step 3, we will take the smallest $s_i$ such that $\frac{\lambda_{l_i^m+1}^{(i)}}{\lambda_{l_i^m+s_i+1}^{(i)}}\leq\delta_0$. We then set ${l_i^{m+1}}=l_i^m+s_i$. And for other $\omega_i$'s, we set ${l_i^{m+1}}=l_i^m$.
\end{enumerate}

After Step 4, we repeat from Step 1 until the global error indicator $\sum_{i=1}^{Ne}(\eta_i^m)^2$ is small enough or the total number of basis functions reaches certain level. The calculations of all the local error indicators can be costly. However, since the error indicators are independent of each other, the computation can be done
in a parallel approach in order to enhance the efficiency.

\subsection{Online adaptive Method}\label{sec:online}

Next, we will present another enrichment algorithm which requires the formation of new basis functions
based on the solution of the previous enrichment level. We call these functions online basis functions as
these basis functions are computed in the online stage of computations. With the addition of the online
basis functions, we can get a much faster convergence rate than the offline adaptive method.
We emphasize that the basis functions are solved using local residuals, and they are not from the spectral problem (\ref{eq:spectral}).

We first define a linear functional which generalizes the residual operator in the offline adaptive method. Given a region $\Omega \subset D$ which is a union of some $\omega_i$'s, we define $V_\Omega=\bigoplus _{\omega_i\subset\Omega}V_{\text{snap}}^{(i)}$.
And define the linear operator $R_\Omega^m$ on $V_\Omega$ , and the norm $\left \| R_\Omega^m \right \|$ by
$$R_\Omega^m(v)=\int_{\Omega} a\,(\nabla\times u_{\text{ms}}^m) (\nabla\times v)+b\,u_{\text{ms}}^m\cdot v-f\cdot v, \quad\forall v\in V_\Omega,$$
$$\left \| R_\Omega^m \right \|=\sup_{v\in V_\Omega}\frac{|R_\Omega^m(v)|}{\left \| v \right \|_{H(\text{curl})(a,b;\Omega)}}.$$
\textbf{Online adaptive method: }We start with iteration number $m=0$. Fix $initial\;number$ of offline basis functions $l_i$ for each $\omega_i$ to form the offline space $V_{\text{ms}}^0$. Then
\begin{enumerate} [Step 1:]
	\item Find the multiscale solution. We solve multiscale solution $u_{\text{ms}}^m \in V_{\text{ms}}^m$ satisfying \eqref{solving u_ms} in Offline Adaptive Method.
	\item Select non-overlapping regions. We pick non-overlapping region $\Omega_1^m,\,\Omega_2^m,\,...,\,\Omega_{J^m}^m\subset D$ such that each $\Omega_j$ is a union of some $\omega_i$'s.
	\item Solve for online basis functions. For each $\Omega_j^m$, we solve $\phi_j^m\in V_{\Omega_j^m}$ such that
	$$R_{\Omega_j^m}^m(v)=\int_{\Omega_j^m} a\,(\nabla\times \phi_j^m) (\nabla\times v)+b\,\phi_j^m\cdot v, \quad\forall v\in V_\Omega.$$
	Then we set $V_{\text{ms}}^{m+1}=V_{\text{ms}}^{m}\bigoplus \left \{ \phi_1^m,\,\phi_2^m,\,...,\,\phi_{J^m}^m \right \}.$\\
	Note that: By Riesz Representation Theorem, $\left \| \phi_j^m \right \|_{H(\text{curl})(a,b;\Omega)}=\left \| R_{\Omega_j^m}^m \right \|. $
\end{enumerate}

After Step 3, we repeat from Step 1 until the global error indicator is small or we have used a certain
number of basis functions.

\section{Convergence Analysis}\label{sec:analysis}
In this section, we will present the proofs for the convergence of both the offline and the online adaptive
methods. We begin with the following a posteriori error bound for the offline adaptive method.

\begin{theorem}\label{th 2}
Let $u_{\text{snap}}\in V_{\text{snap}}$ be the solution of (\ref{solving u_snap}) and let $u_{\text{ms}}\in V_{\text{ms}}$ be the solution of (\ref{solving u_ms}). Then we have 
	$$\left \| u_{\text{snap}}-u_{\text{ms}} \right \|_{H(\text{curl})(a,b;D)}^2\leq C_{err} \sum_{i=1}^{N_e}\left \| R_i \right \|^2 (\lambda_{l_i+1}^{(i)})^{-1}$$ 
	where $C_{err}=\frac{C_VH}{h}$. The value of $C_V$ depends on the polynomial order of the fine grid basis functions in $V_{\text{snap}}$.
	Here we omit superscript $m$ for brevity of notation.
\end{theorem}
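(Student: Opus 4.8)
The plan is the standard a posteriori route for adaptive GMsFEM: combine Galerkin orthogonality with the spectral decomposition that defines $V_{\text{ms}}$. Throughout, write $e = u_{\text{snap}} - u_{\text{ms}}$ and $B(v,w) = \int_D \big( a\,(\nabla\times v)(\nabla\times w) + b\,v\cdot w \big)$. Since $V_{\text{ms}} \subset V_{\text{snap}}$ and both $u_{\text{snap}}$ and $u_{\text{ms}}$ are Galerkin solutions, $B(e,w) = 0$ for all $w\in V_{\text{ms}}$; in particular $R_i$ vanishes on $V_{\text{ms}}^{(i)}$, because its elements are supported in $\omega_i$ and $B$ is a sum of local integrals. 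Testing \eqref{solving u_snap} with $e\in V_{\text{snap}}$ gives the energy identity $\|e\|_{H(\text{curl})(a,b;D)}^2 = B(e,e) = (f,e) - B(u_{\text{ms}},e)$.

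Next I would split $e$ into local snapshot contributions. Every snapshot function satisfies the homogeneous variational equation against all of $V_{\text{h}}^0(K)$ in each coarse block, so $u_{\text{snap}}$, $u_{\text{ms}}$ and hence $e$ are discretely harmonic in every $K\in\mathcal{T}^H$. For each interior coarse edge $E_i$ let $v_i \in V_{\text{snap}}^{(i)}$ be the snapshot function with $v_i\cdot t = (e\cdot t)|_{E_i}$ on $E_i$ and $v_i\cdot t = 0$ on $\partial\omega_i$; equivalently $v_i = \sum_j \alpha_j\psi_j^{(i)}$ with $\alpha_j$ the fine-edge degrees of freedom of $e$ along $E_i$. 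A short geometric argument — using that $\omega_i$ is the union of the two coarse blocks sharing $E_i$, and that every other coarse edge of those blocks lies on $\partial\omega_i$ — shows that on a given $K$ the functions $v_j$ with $E_j\not\subset\partial K$ vanish while the remaining ones reproduce the tangential trace of $e$ on $\partial K$; uniqueness of the discrete harmonic extension then yields $\sum_i v_i = e$. Consequently, since each $v_i$ is supported in $\omega_i$, we get $\|e\|_{H(\text{curl})(a,b;D)}^2 = -\sum_i R_i(v_i) \le \sum_i |R_i(v_i)|$.

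The spectral problem controls each $|R_i(v_i)|$. Let $\Pi_i$ denote the $s_i$-orthogonal projection of $V_{\text{snap}}^{(i)}$ onto $\mathrm{span}(\phi_1^{(i)},\dots,\phi_{l_i}^{(i)}) = V_{\text{ms}}^{(i)}$. Since $R_i(\Pi_i v_i) = 0$, we have $R_i(v_i) = R_i(v_i - \Pi_i v_i)$, hence $|R_i(v_i)| \le \|R_i\|\,\|v_i - \Pi_i v_i\|_{H(\text{curl})(a,b;\omega_i)}$. Expanding $v_i$ in the $s_i$-orthonormal eigenbasis and using $\lambda_j^{(i)}\ge\lambda_{l_i+1}^{(i)}$ for $j>l_i$ gives $s_i(v_i - \Pi_i v_i, v_i-\Pi_i v_i) \le (\lambda_{l_i+1}^{(i)})^{-1} a_i(v_i,v_i)$, that is,
$$\|v_i - \Pi_i v_i\|_{H(\text{curl})(a,b;\omega_i)}^2 \le H\,(\lambda_{l_i+1}^{(i)})^{-1}\,\|e\cdot t\|_{L^2(b;E_i)}^2 ,$$
where $a_i(v_i,v_i) = \|e\cdot t\|_{L^2(b;E_i)}^2$ by the choice of $v_i$. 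Summing the resulting bound $|R_i(v_i)| \le \|R_i\|\,(H(\lambda_{l_i+1}^{(i)})^{-1})^{1/2}\|e\cdot t\|_{L^2(b;E_i)}$ over $i$, applying the Cauchy--Schwarz inequality, and invoking a discrete trace inequality $\sum_i \|e\cdot t\|_{L^2(b;E_i)}^2 \le (C_V/h)\,\|e\|_{H(\text{curl})(a,b;D)}^2$ gives $\|e\|^2 \le (C_V H/h)^{1/2}\big(\sum_i\|R_i\|^2(\lambda_{l_i+1}^{(i)})^{-1}\big)^{1/2}\|e\|$; dividing by $\|e\|$ and squaring yields the claim with $C_{err} = C_V H/h$.

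I expect the discrete trace inequality of the last step to be the main obstacle: one must bound the weighted $L^2(b)$-norm of the tangential trace of a fine-grid N\'ed\'elec field on the coarse skeleton by $h^{-1/2}$ times its volume $H(\text{curl})(a,b)$-norm, which is exactly where fine-grid inverse estimates enter — and where $C_V$ picks up its dependence on the polynomial degree of $V_{\text{h}}$ — and one also has to check that the weight $b$ is carried through without a contrast-dependent factor (for instance by arguing elementwise with $b$ piecewise constant on $\mathcal{T}^h$). The secondary delicate point is the rigorous justification of the harmonic-extension decomposition $e=\sum_i v_i$, which relies on the precise geometry of the coarse neighborhoods.
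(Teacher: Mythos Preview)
Your proof is correct and follows essentially the same route as the paper's: the energy identity, the decomposition $e=\sum_i v_i$ into local snapshot contributions determined by the tangential traces on the coarse edges, the spectral bound $s_i(v_i-\Pi_i v_i,v_i-\Pi_i v_i)\le(\lambda_{l_i+1}^{(i)})^{-1}a_i(v_i,v_i)$, Cauchy--Schwarz, and the discrete trace inequality $\sum_i\int_{E_i}b\,|e\cdot t|^2\le (C_V/h)\|e\|_{L^2(b;D)}^2$. If anything, you are more careful than the paper in justifying the decomposition $e=\sum_i v_i$ via uniqueness of the discrete harmonic extension in each $K$, and in flagging the trace step as the place where the fine-grid inverse estimate and the dependence of $C_V$ on polynomial degree enter.
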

\begin{proof}
	We define the global residual operator $R$ as a linear functional on $V_{\text{snap}}$ by 
	$$R(v)=\int_D a\,(\nabla\times u_{\text{ms}}) (\nabla\times v)+b\,u_{\text{ms}}\cdot v-f\cdot v, \;\;\forall v\in V_{\text{snap}}.$$
	Thus we have 
	\begin{align} 
	\begin{split}\label{1}
	-R(u_{\text{snap}}-u_{\text{ms}})=&\int_D a\,(\nabla\times u_{\text{ms}}) (\nabla\times (u_{\text{snap}}-u_{\text{ms}}))+b\,u_{\text{ms}}\cdot (u_{\text{snap}}-u_{\text{ms}})\\
	&-f\cdot (u_{\text{snap}}-u_{\text{ms}}).
	\end{split}
	\end{align}
	Taking $v = u_{\text{snap}}-u_{\text{ms}}$ in \eqref{solving u_snap}, we have
	\begin{align} 
	\begin{split} \label{2}
	0=\int_D a\,(\nabla\times u_{\text{snap}}) (\nabla\times (u_{\text{snap}}-u_{\text{ms}}))+b\,u_{\text{snap}}\cdot (u_{\text{snap}}-u_{\text{ms}})-f\cdot (u_{\text{snap}}-u_{\text{ms}}).
	\end{split}
	\end{align}
	Adding \eqref{1} and \eqref{2}, we have
	\begin{align*} 
	-R(u_{\text{snap}}-u_{\text{ms}})=&\int_D a\,|\nabla\times (u_{\text{snap}}-u_{\text{ms}})|^2+b\,|u_{\text{snap}}-u_{\text{ms}}|^2\\
	= &\left \|u_{\text{snap}}-u_{\text{ms}}  \right \|_{H(\text{curl})(a,b;D)}^2.
	\end{align*}
	
	Next, we decompose $u_{\text{snap}}-u_{\text{ms}}$ as the sum of functions from $V_{\text{snap}}^{(i)}$'s, that is, we write $u_{\text{snap}}-u_{\text{ms}}=\sum_{i=1}^{N_e}v^{(i)}$ where $v^{(i)}\in V_{\text{snap}}^{(i)}$. And each $v^{(i)}$ is a sum of two functions which are from $V_{\text{ms}}^{(i)}$ and $V_{\text{snap}}^{(i)}\backslash V_{\text{ms}}^{(i)}$, respectively. We denote the function from $V_{\text{snap}}^{(i)}\backslash V_{\text{ms}}^{(i)}$ by $v_r^{(i)}$.\par
	By the definition of $R_i$ and \eqref{solving u_ms}, we know $R_i(v)=0$ for all $v\in V_{\text{ms}}^{(i)}\subset V_{\text{ms}}$. Therefore, $R_i(v^{(i)})=R_i(v_r^{(i)})$. So, we have 
	$$R(u_{\text{snap}}-u_{\text{ms}}) = \sum_{i=1}^{Ne}R(v^{(i)})= \sum_{i=1}^{Ne}R_i(v^{(i)})= \sum_{i=1}^{Ne}R_i(v_r^{(i)}).$$
	Using the definition of the spectral problems, we get
	\begin{align*} 
	 \left |-R(u_{\text{snap}}-u_{\text{ms}})\right | =& \left |\sum_{i=1}^{Ne}R_i(v_r^{(i)})  \right |
	\leq \sum_{i=1}^{Ne}\left |R_i(v^{(i)})\right|
	\leq \sum_{i=1}^{Ne}\left \| R_i \right \|\left (s_i(v_r^{(i)},\,v_r^{(i)})  \right )^{1/2},
	\end{align*} 
	and
	\begin{align*} 
	s_i(v_r^{(i)},\,v_r^{(i)})\leq&H(\lambda_{l_i+1}^{(i)})^{-1} a_i(v_r^{(i)},\,v_r^{(i)})
	\leq H(\lambda_{l_i+1}^{(i)})^{-1} a_i(v^{(i)},\,v^{(i)})\\
	=&H(\lambda_{l_i+1}^{(i)})^{-1} \int_{E_i} b\, |v^{(i)}\cdot t|^2 
	= H(\lambda_{l_i+1}^{(i)})^{-1} \int_{E_i} b \,|(u_{\text{snap}}-u_{\text{ms}})\cdot t|^2.
	\end{align*} 
	Hence, 
	\begin{align*} 
	\left \|u_{\text{snap}}-u_{\text{ms}}  \right \|_{H(\text{curl})(a,b;D)}^2=&-R(u_{\text{snap}}-u_{\text{ms}})\\
	\leq&\sqrt{H}\sum_{i=1}^{N_e}\left \| R_i \right \|(\lambda_{l_i+1}^{(i)})^{-1/2} \left (\int_{E_i} b \,|(u_{\text{snap}}-u_{\text{ms}})\cdot t|^2  \right )^{1/2}\\
	\leq&\sqrt{H}\left (\sum_{i=1}^{N_e}\left \| R_i \right \|^2(\lambda_{l_i+1}^{(i)})^{-1}  \right )^{1/2} \left (\sum_{i=1}^{Ne}\int_{E_i} b \,|(u_{\text{snap}}-u_{\text{ms}})\cdot t|^2  \right )^{1/2}\\
	\leq&\sqrt{\frac{C_VH}{h}}\left (\sum_{i=1}^{N_e}\left \| R_i \right \|^2(\lambda_{l_i+1}^{(i)})^{-1}  \right )^{1/2} \left \|u_{\text{snap}}-u_{\text{ms}}  \right \|_{L^2(b;D)}\\
	\leq&\sqrt{\frac{C_VH}{h}}\left (\sum_{i=1}^{N_e}\left \| R_i \right \|^2(\lambda_{l_i+1}^{(i)})^{-1}  \right )^{1/2} \left \|u_{\text{snap}}-u_{\text{ms}}  \right \|_{H(\text{curl})(a,b;D)}.
	\end{align*} 
	Therefore,
	$$\quad\left \| u_{\text{snap}}-u_{\text{ms}} \right \|_{H(\text{curl})(a,b;D)}^2\leq \frac{C_VH}{h} \sum_{i=1}^{N_e}\left \| R_i \right \|^2 (\lambda_{l_i+1}^{(i)})^{-1}.$$ 
	
\end{proof}

From the proof of Theorem \ref{th 2}, the error between the multiscale solution and the snapshot solution is
bounded above by the norm of the global residual operator $R$, which in turn can be estimated by the sum
of the error indicator $\left \| R_i \right \|^2 (\lambda_{l_i+1}^{(i)})^{-1}$.

Before we move on to the proof of the convergence of the offline adaptive method, we will prove the following lemma for the error indicator $\eta_i$.
\begin{lemma}\label{lemma 1}
	For any $\alpha>0$, we have
	$$(\eta_i^{m+1})^2\leq (1+\alpha)\frac{\lambda_{l_i^m+1}^{(i)}}{\lambda_{l_i^{m+1}+1}^{(i)}}(\eta_i^{m})^2+(1+\alpha^{-1})\left (\lambda_{l_i^{m+1}+1}^{(i)}  \right )^{-1}\left \| u_{\text{ms}}^{m+1}-u_{\text{ms}}^{m} \right \|_{H(\text{curl})(a,b;\omega_i)}^2.$$
	
\end{lemma}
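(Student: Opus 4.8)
The plan is to compare the two residual functionals $R_i^{m+1}$ and $R_i^m$ directly, since both are linear functionals on the \emph{same} space $V_{\text{snap}}^{(i)}$. Subtracting their definitions, for every $v\in V_{\text{snap}}^{(i)}$ we have
$$R_i^{m+1}(v)-R_i^m(v)=\int_{\omega_i} a\,\bigl(\nabla\times(u_{\text{ms}}^{m+1}-u_{\text{ms}}^m)\bigr)(\nabla\times v)+b\,(u_{\text{ms}}^{m+1}-u_{\text{ms}}^m)\cdot v,$$
because the source term $f\cdot v$ cancels. By the Cauchy--Schwarz inequality applied with the weights $a$ and $b$, the right-hand side is bounded by $\|u_{\text{ms}}^{m+1}-u_{\text{ms}}^m\|_{H(\text{curl})(a,b;\omega_i)}\,\|v\|_{H(\text{curl})(a,b;\omega_i)}$. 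Dividing by $\|v\|_{H(\text{curl})(a,b;\omega_i)}$ and taking the supremum over $v\in V_{\text{snap}}^{(i)}$ gives
$$\bigl\|R_i^{m+1}-R_i^m\bigr\|\leq \bigl\|u_{\text{ms}}^{m+1}-u_{\text{ms}}^m\bigr\|_{H(\text{curl})(a,b;\omega_i)}.$$

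Next I would use the triangle inequality for the operator norm, $\|R_i^{m+1}\|\leq \|R_i^m\|+\|R_i^{m+1}-R_i^m\|$, combined with the elementary Young-type inequality $(p+q)^2\leq (1+\alpha)p^2+(1+\alpha^{-1})q^2$ valid for all $p,q\geq 0$ and $\alpha>0$. This yields
$$\|R_i^{m+1}\|^2\leq (1+\alpha)\,\|R_i^m\|^2+(1+\alpha^{-1})\,\bigl\|u_{\text{ms}}^{m+1}-u_{\text{ms}}^m\bigr\|_{H(\text{curl})(a,b;\omega_i)}^2.$$
Finally I multiply both sides by $(\lambda_{l_i^{m+1}+1}^{(i)})^{-1}$; the left-hand side becomes $(\eta_i^{m+1})^2$ by definition, and on the first term of the right-hand side I insert the factor $\lambda_{l_i^m+1}^{(i)}/\lambda_{l_i^m+1}^{(i)}$ so that $\|R_i^m\|^2(\lambda_{l_i^m+1}^{(i)})^{-1}=(\eta_i^m)^2$ appears, leaving the ratio $\lambda_{l_i^m+1}^{(i)}/\lambda_{l_i^{m+1}+1}^{(i)}$ as a multiplicative factor. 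This is exactly the claimed inequality.

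There is no real obstacle here: the argument is essentially a triangle inequality plus a rescaling, and the only point requiring a moment of care is the bookkeeping of the eigenvalue normalization factor when passing from $\|R_i^m\|^2(\lambda_{l_i^{m+1}+1}^{(i)})^{-1}$ to $(\eta_i^m)^2$. It is worth remarking in passing that because the adaptive procedure only adds basis functions, $l_i^{m+1}\geq l_i^m$, so by the ascending ordering of the eigenvalues the ratio $\lambda_{l_i^m+1}^{(i)}/\lambda_{l_i^{m+1}+1}^{(i)}$ is at most one; this is what makes the lemma the natural engine for the subsequent contraction/convergence argument, though it is not needed for the inequality itself.
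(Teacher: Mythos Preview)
Your proposal is correct and is essentially the same argument as the paper's: subtract the two residuals, apply Cauchy--Schwarz, take the supremum to compare $\|R_i^{m+1}\|$ with $\|R_i^m\|$, and finish with Young's inequality $(p+q)^2\le(1+\alpha)p^2+(1+\alpha^{-1})q^2$. The only cosmetic difference is that the paper divides by $(\lambda_{l_i^{m+1}+1}^{(i)})^{1/2}$ before squaring (so the inequality is first stated for $\eta_i^{m+1}$ itself), whereas you square first and then normalize; the content is identical.
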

\begin{proof}
	For any $v\in V_{\text{snap}}^{(i)}$, we have
	$$R_i^m(v)=\int_{\omega_i} a\,(\nabla\times u_{\text{ms}}^m) (\nabla\times v)+b\,u_{\text{ms}}^m\cdot v-f\cdot v,$$
	$$\text{and}\quad R_i^{m+1}(v)=\int_{\omega_i} a\,(\nabla\times u_{\text{ms}}^{m+1}) (\nabla\times v)+b\,u_{\text{ms}}^{m+1}\cdot v-f\cdot v.$$
	Therefore
	\begin{align*}
	 R_i^{m+1}(v)=&R_i^m(v)+\int_{\omega_i} a\,(\nabla\times (u_{\text{ms}}^{m+1}-u_{\text{ms}}^m)) (\nabla\times v)+b\,(u_{\text{ms}}^{m+1}-u_{\text{ms}}^m)\cdot v\\
	\leq& R_i^m(v)+\left \| u_{\text{ms}}^{m+1}-u_{\text{ms}}^m \right \|_{H(\text{curl})(a,b;\omega_i)}\left \| v \right \|_{H(\text{curl})(a,b;\omega_i)}.
	\end{align*}
	Taking supremum with respect to $v$, we get
	$$\eta_i^{m+1}\leq \left (\frac{\lambda_{l_i^m+1}^{(i)}}{\lambda_{l_i^{m+1}+1}^{(i)}}  \right )^{1/2}  \eta_i^{m}+\left (\lambda_{l_i^{m+1}+1}^{(i)}  \right )^{-1/2}\left \| u_{\text{ms}}^{m+1}-u_{\text{ms}}^m \right \|_{H(\text{curl})(a,b;\omega_i)}.$$
	Using the Young's inequality, we obtain the required result. 
	
\end{proof}

Using this lemma, we prove the following result for the convergence of the offline adaptive method.

\begin{theorem}\label{th 3}
	There exists $\rho>0$ and a non-increasing sequence of positive number $\left \{ L_j \right \}$ such that
	$$\left \| u_{\text{snap}}-u_{\text{ms}}^{m+1} \right \|_{H(\text{curl})(a,b;D)}+\frac{1}{L_j}\sum_{i=1}^{N_e}\left (\eta_i^{m+1}  \right )^2\leq \varepsilon _j\left (\left \| u_{\text{snap}}-u_{\text{ms}}^{m} \right \|_{H(\text{curl})(a,b;D)}+\frac{1}{L_j}\sum_{i=1}^{N_e}\left (\eta_i^{m}  \right )^2  \right )$$
	for any $j\leq m$, where $\rho$ satisfies $1-(1-\delta_0)\theta<\rho<1$, and $\varepsilon_j=\frac{C_{err} L_j+\rho}{C_{err} L_j+1}$.
\end{theorem}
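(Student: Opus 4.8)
The plan is to establish a one-step contraction by combining three ingredients: the a posteriori upper bound of Theorem~\ref{th 2}, a Galerkin orthogonality identity between consecutive iterates, and a perturbation estimate for the error indicators obtained from Lemma~\ref{lemma 1} together with the marking and enrichment rules of Steps~3--4. Throughout I write $e_m^2=\|u_{\text{snap}}-u_{\text{ms}}^m\|_{H(\text{curl})(a,b;D)}^2$, $\Theta_m=\sum_{i=1}^{N_e}(\eta_i^m)^2$ and $d_m^2=\|u_{\text{ms}}^{m+1}-u_{\text{ms}}^m\|_{H(\text{curl})(a,b;D)}^2$, and I prove the stated estimate with $e_{m+1}^2,e_m^2$ as the first summands (the form consistent with Theorem~\ref{th 2} and Lemma~\ref{lemma 1}).

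First I record the orthogonality. Since Step~4 only appends eigenfunctions with larger indices, the offline spaces are nested, $V_{\text{ms}}^m\subset V_{\text{ms}}^{m+1}\subset V_{\text{snap}}$, so $u_{\text{ms}}^m$ and $u_{\text{ms}}^{m+1}$ are the orthogonal projections of $u_{\text{snap}}$ onto these spaces in the bilinear form $(v,w)\mapsto\int_D a\,(\nabla\times v)(\nabla\times w)+b\,v\cdot w$; the Pythagorean identity then gives $e_m^2=e_{m+1}^2+d_m^2$. Next I sum the estimate of Lemma~\ref{lemma 1} over $i$ and split the first sum according to whether $\omega_i$ was marked in Step~3: for an unmarked neighborhood $l_i^{m+1}=l_i^m$, so the eigenvalue ratio equals $1$, whereas for a marked one Step~4 forces $\lambda_{l_i^m+1}^{(i)}/\lambda_{l_i^{m+1}+1}^{(i)}\le\delta_0$. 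Together with the D\"{o}rfler-type marking $\sum_{\text{marked}}(\eta_i^m)^2\ge\theta\,\Theta_m$ this yields
\[
\sum_{i=1}^{N_e}\frac{\lambda_{l_i^m+1}^{(i)}}{\lambda_{l_i^{m+1}+1}^{(i)}}(\eta_i^m)^2\le\bigl(1-(1-\delta_0)\theta\bigr)\Theta_m .
\]
For the perturbation term in Lemma~\ref{lemma 1} I bound $(\lambda_{l_i^{m+1}+1}^{(i)})^{-1}$ by $\max_i(\lambda_{l_i^{m+1}+1}^{(i)})^{-1}$ and use the bounded overlap of the neighborhoods (each coarse block lies in a fixed, contrast-independent number of $\omega_i$'s) to replace $\sum_i\|u_{\text{ms}}^{m+1}-u_{\text{ms}}^m\|_{H(\text{curl})(a,b;\omega_i)}^2$ by $C_{ov}d_m^2$. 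Fixing $\rho\in(1-(1-\delta_0)\theta,\,1)$ and then choosing $\alpha>0$ small enough that $(1+\alpha)(1-(1-\delta_0)\theta)\le\rho$, this gives $\Theta_{m+1}\le\rho\,\Theta_m+C_1^{(m)}d_m^2$ with $C_1^{(m)}=(1+\alpha^{-1})C_{ov}\max_i(\lambda_{l_i^{m+1}+1}^{(i)})^{-1}$. Since $l_i^{m+1}$ is non-decreasing in $m$ and the eigenvalues are arranged increasingly, $C_1^{(m)}$ is non-increasing in $m$, so I may set $L_j=C_1^{(j)}$, a non-increasing sequence of positive numbers with $C_1^{(m)}/L_j\le1$ for all $m\ge j$.

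It then remains to assemble the contraction. For $j\le m$, using $\Theta_{m+1}\le\rho\Theta_m+C_1^{(m)}d_m^2$, then $C_1^{(m)}/L_j\le1$, then $e_m^2=e_{m+1}^2+d_m^2$, one obtains $e_{m+1}^2+\tfrac{1}{L_j}\Theta_{m+1}\le e_m^2+\tfrac{\rho}{L_j}\Theta_m$. With $\varepsilon_j=(C_{err}L_j+\rho)/(C_{err}L_j+1)$ a short computation gives $1-\varepsilon_j=(1-\rho)/(C_{err}L_j+1)$ and $\varepsilon_j-\rho=C_{err}L_j(1-\rho)/(C_{err}L_j+1)$, so the desired bound $e_m^2+\tfrac{\rho}{L_j}\Theta_m\le\varepsilon_j\bigl(e_m^2+\tfrac{1}{L_j}\Theta_m\bigr)$ reduces, after clearing denominators and cancelling $1-\rho>0$, to exactly $e_m^2\le C_{err}\Theta_m$, which is Theorem~\ref{th 2}. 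Since $\rho<1$ we have $\varepsilon_j<1$, so this is a genuine contraction, and $\varepsilon_j$ is itself non-increasing in $j$.

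The main obstacle is the indicator step: turning Lemma~\ref{lemma 1} plus the marking/enrichment rules into the clean recursion $\Theta_{m+1}\le\rho\Theta_m+C_1^{(m)}d_m^2$ with constants independent of $a$ and $b$. This requires carefully tracking which neighborhoods are enriched (and using that the eigenvalue ratio is exactly $1$ on the others), controlling the overlap of the $\omega_i$'s so that $C_{ov}$ — hence $C_1^{(m)}$ and $L_j$ — stays contrast-independent, and observing the monotonicity of $C_1^{(m)}$ in $m$ that supplies the sequence $\{L_j\}$. Everything after that is the algebraic identity verification of the last step.
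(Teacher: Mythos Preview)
Your proposal is correct and follows essentially the same route as the paper: Lemma~\ref{lemma 1} summed over $i$, split by marked/unmarked using Steps~3--4, bounded overlap of the $\omega_i$'s (the paper uses the explicit constant $4$ for rectangular grids in place of your $C_{ov}$), Galerkin orthogonality for the Pythagorean identity, and Theorem~\ref{th 2} to close the contraction. The only cosmetic difference is in the final assembly: the paper adds $\beta$ times the a~posteriori bound and then solves for $\beta=(1-\rho)/(1+C_{err}L_j)$, whereas you verify directly that the desired inequality reduces to $e_m^2\le C_{err}\Theta_m$; the two computations are algebraically equivalent.
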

\begin{proof}
Let $I$ be the set of indices $i$ such that $\omega_i$ is chosen for basis enrichment. 
	Using Lemma \ref{lemma 1}, we have
	\begin{align*} 
		\sum_{i=1}^{N_e}\left (\eta_i^{m+1}  \right )^2\leq&\sum_{i=1}^{N_e}\left ((1+\alpha)\frac{\lambda_{l_i^m+1}^{(i)}}{\lambda_{l_i^{m+1}+1}^{(i)}} \left (\eta_i^{m}  \right )^2+(1+\alpha^{-1})\left (\lambda_{l_i^{m+1}+1}^{(i)}  \right )^{-1}\left \| u_{\text{ms}}^{m+1}-u_{\text{ms}}^m \right \|_{H(\text{curl})(a,b;\omega_i)}^2  \right ).
		\end{align*}
	Writing the above sum as a sum over $I$ and a sum over the complement of $I$, we have
	\begin{align*}	
	\sum_{i=1}^{N_e}\left (\eta_i^{m+1}  \right )^2
		=&\sum_{i\in I}(1+\alpha)\frac{\lambda_{l_i^m+1}^{(i)}}{\lambda_{l_i^{m+1}+1}^{(i)}} \left (\eta_i^{m}  \right )^2+\sum_{i\notin I}(1+\alpha) \left (\eta_i^{m}  \right )^2 \\
		&+\sum_{i=1}^{N_e}(1+\alpha^{-1})\left (\lambda_{l_i^{m+1}+1}^{(i)}  \right )^{-1}\left \| u_{\text{ms}}^{m+1}-u_{\text{ms}}^m \right \|_{H(\text{curl})(a,b;\omega_i)}^2
	\end{align*}
	Using the criterion in the Step 4 in the offline adaptive algorithm, we have
	\begin{align*}
	\sum_{i=1}^{N_e}\left (\eta_i^{m+1}  \right )^2
		\leq&\sum_{i\in I}(1+\alpha)\delta_0 \left (\eta_i^{m}  \right )^2+\sum_{i\notin I}(1+\alpha) \left (\eta_i^{m}  \right )^2 \\
		&+\sum_{i=1}^{N_e}(1+\alpha^{-1})\left (\lambda_{l_i^{m+1}+1}^{(i)}  \right )^{-1}\left \| u_{\text{ms}}^{m+1}-u_{\text{ms}}^m \right \|_{H(\text{curl})(a,b;\omega_i)}^2.
	\end{align*}
	Next, using the criterion in the Step 3 in the offline adaptive algorithm, we have
	\begin{align*}
	\sum_{i=1}^{N_e}\left (\eta_i^{m+1}  \right )^2
		=&(1+\alpha)\sum_{i=1}^{N_e} \left (\eta_i^{m}  \right )^2-(1+\alpha)(1-\delta_0)\sum_{i\in I} \left (\eta_i^{m}  \right )^2 \\
		&+\sum_{i=1}^{N_e}(1+\alpha^{-1})\left (\lambda_{l_i^{m+1}+1}^{(i)}  \right )^{-1}\left \| u_{\text{ms}}^{m+1}-u_{\text{ms}}^m \right \|_{H(\text{curl})(a,b;\omega_i)}^2\\
		=&(1+\alpha)\sum_{i=1}^{N_e} \left (\eta_i^{m}  \right )^2-(1+\alpha)(1-\delta_0)\theta\sum_{i=1}^{N_e} \left (\eta_i^{m}  \right )^2 \\
		&+\sum_{i=1}^{N_e}(1+\alpha^{-1})\left (\lambda_{l_i^{m+1}+1}^{(i)}  \right )^{-1}\left \| u_{\text{ms}}^{m+1}-u_{\text{ms}}^m \right \|_{H(\text{curl})(a,b;\omega_i)}^2\\
		=&(1+\alpha)(1-(1-\delta_0)\theta)\sum_{i=1}^{N_e} \left (\eta_i^{m}  \right )^2  \\
		&+\sum_{i=1}^{N_e}(1+\alpha^{-1})\left (\lambda_{l_i^{m+1}+1}^{(i)}  \right )^{-1}\left \| u_{\text{ms}}^{m+1}-u_{\text{ms}}^m \right \|_{H(\text{curl})(a,b;\omega_i)}^2.
		\end{align*} 
	Since all $\omega_i$'s will overlap for no more than 4 times, we have following estimate
	$$\sum_{i=1}^{N_e}(1+\alpha^{-1})\left (\lambda_{l_i^{m+1}+1}^{(i)}  \right )^{-1}\left \| u_{\text{ms}}^{m+1}-u_{\text{ms}}^m \right \|_{H(\text{curl})(a,b;\omega_i)}^2\leq L_m\left \| u_{\text{ms}}^{m+1}-u_{\text{ms}}^m \right \|_{H(\text{curl})(a,b;D)}^2,$$
	where
	\begin{align*}
	\quad L_m=&4(1+\alpha^{-1})\left (\max_{1\leq i\leq N_e}\left (\lambda_{l_i^{m+1}+1}^{(i)}  \right )^{-1}\right )=4(1+\alpha^{-1})/\Lambda_*^m,\\
	\Lambda_*^m=&\min_{1\leq i\leq N_e}\lambda_{l_i^{m+1}+1}^{(i)}.
	\end{align*}
	Note that $L_m$ is non-increasing.
	Let $\rho=(1+\alpha)(1-(1-\delta_0)\theta)$, and take $\alpha$ small enough such that $0<\rho<1$, we have the estimate
	\begin{align} \label{3}
	\begin{split}
	\sum_{i=1}^{N_e}\left (\eta_i^{m+1}  \right )^2\leq&\rho\sum_{i=1}^{N_e}\left (\eta_i^{m+1}  \right )^2+L_m\left \| u_{\text{ms}}^{m+1}-u_{\text{ms}}^m \right \|_{H(\text{curl})(a,b;D)}^2\\
	\leq&\rho\sum_{i=1}^{N_e}\left (\eta_i^{m+1}  \right )^2+L_j\left \| u_{\text{ms}}^{m+1}-u_{\text{ms}}^m \right \|_{H(\text{curl})(a,b;D)}^2.
	\end{split}
	\end{align} 
	
	On the other hand, combining \eqref{solving u_snap} and \eqref{solving u_ms}, we have
	$$ \int_D a\,(\nabla\times(u_{\text{snap}}-u_{\text{ms}}^{m+1}))\cdot(\nabla\times v)+b\,(u_{\text{snap}}-u_{\text{ms}}^{m+1})\cdot v=0\quad \forall v\in V_{\text{ms}}^{m+1}.$$
	Taking $v=u_{\text{ms}}^{m+1}-u_{\text{ms}}^{m}\in V_{\text{ms}}^{m+1}$, we have
	$$\int_D a\,(\nabla\times(u_{\text{snap}}-u_{\text{ms}}^{m+1}))\cdot(\nabla\times (u_{\text{ms}}^{m+1}-u_{\text{ms}}^{m}))+b\,(u_{\text{snap}}-u_{\text{ms}}^{m+1})\cdot (u_{\text{ms}}^{m+1}-u_{\text{ms}}^{m})=0.$$
	Therefore,
	$$ \left \| u_{\text{snap}}-u_{\text{ms}}^m \right \|_{H(\text{curl})(a,b;D)}^2=\left \| u_{\text{snap}}-u_{\text{ms}}^{m+1} \right \|_{H(\text{curl})(a,b;D)}^2+\left \| u_{\text{ms}}^{m+1}-u_{\text{ms}}^m \right \|_{H(\text{curl})(a,b;D)}^2,$$
	and
	consequently,
	$$ \left \| u_{\text{ms}}^{m+1}-u_{\text{ms}}^m \right \|_{H(\text{curl})(a,b;D)}^2=\left \| u_{\text{snap}}-u_{\text{ms}}^m \right \|_{H(\text{curl})(a,b;D)}^2-\left \| u_{\text{snap}}-u_{\text{ms}}^{m+1} \right \|_{H(\text{curl})(a,b;D)}^2.$$
	
	Combining the above with \eqref{3}, we obtain
	\begin{align}\label{4}
	\left \| u_{\text{snap}}-u_{\text{ms}}^{m+1} \right \|_{H(\text{curl})(a,b;D)}^2+\frac{1}{L_j}\sum_{i=1}^{Ne}(\eta_i^{m+1})^2\leq\left \| u_{\text{snap}}-u_{\text{ms}}^{m} \right \|_{H(\text{curl})(a,b;D)}^2+\frac{\rho}{L_j}\sum_{i=1}^{Ne}(\eta_i^{m})^2.
	\end{align}
	By Theorem \ref{th 2}, and for any $\beta>0$, we have
	\begin{align}\label{5}
	\beta\left \| u_{\text{snap}}-u_{\text{ms}}^m \right \|_{H(\text{curl})(a,b;D)}^2\leq \beta C_{err} \sum_{i=1}^{Ne}(\eta_i^m)^2.
	\end{align}
	Adding \eqref{4} and \eqref{5}, we get
	$$\left \| u_{\text{snap}}-u_{\text{ms}}^{m+1} \right \|_{H(\text{curl})(a,b;D)}^2+\frac{1}{L_j}\sum_{i=1}^{Ne}(\eta_i^{m+1})^2\leq(1-\beta)\left \| u_{\text{snap}}-u_{\text{ms}}^{m} \right \|_{H(\text{curl})(a,b;D)}^2+(\beta C_{err}+\frac{\rho}{L_j})\sum_{i=1}^{Ne}(\eta_i^{m})^2. $$
	Setting $\beta=\frac{1-\rho}{1+C_{err}L_j}$, we have
	$$\left \| u_{\text{snap}}-u_{\text{ms}}^{m+1} \right \|_{H(\text{curl})(a,b;D)}^2+\frac{1}{L_j}\sum_{i=1}^{Ne}(\eta_i^{m+1})^2\leq(1-\beta)\left \| u_{\text{snap}}-u_{\text{ms}}^{m} \right \|_{H(\text{curl})(a,b;D)}^2+\frac{1-\beta}{L_j}\sum_{i=1}^{Ne}(\eta_i^{m})^2.$$ 
	Finally, let $\varepsilon_j=1-\beta=\frac{C_{err} L_j+\rho}{C_{err} L_j+1}$. Then we have completed the proof.
\end{proof}
\normalsize
From Theorem \ref{th 3}, to have a fast convergence, we need $\varepsilon_j$ to be small which requires a small $\rho$. Since $\rho=(1+\alpha)(1-(1-\delta_0)\theta)$, we can take $\alpha$ small enough such that  $\rho\approx (1-(1-\delta_0)\theta)$. So we need a small $\delta_0$ and a large $\theta$, which means that we need to select more $\omega_i$'s to add basis; and for each selected $\omega_i$, we add more eigenfunctions to offline basis. And this conclusion is coincide with our intuition.

Finally, we state and prove the convergence of the online adaptive method.

\begin{theorem}
Let $u_{\text{ms}}^m$ be the solution of the online adaptive method at the $m$-th iteration. Then we have 
	$$\left \| u_{\text{snap}}-u_{\text{ms}}^{m+1} \right \|_{H(\text{curl})(a,b;D)}^2\leq \left \| u_{\text{snap}}-u_{\text{ms}}^{m} \right \|_{H(\text{curl})(a,b;D)}^2-\sum_{j=1}^{J^m}\left \| R_{\Omega_j^m}^m \right \|^2.$$
	In addition, assume that the initial space contains $l_i$ offline basis functions for the region $\omega_i$. Then we have
	\begin{equation*}
	\left \| u_{\text{snap}}-u_{\text{ms}}^{m+1} \right \|_{H(\text{curl})(a,b;D)}^2\leq (1-E) \left \| u_{\text{snap}}-u_{\text{ms}}^{m} \right \|_{H(\text{curl})(a,b;D)}^2
	\end{equation*}
	where
	\begin{equation*}
	E = \sum_{j=1}^{J^m}\left \| R_{\Omega_j^m}^m \right \|^2 \, \Big(C_{err} \sum_{i=1}^{N_e}\left \| R_i \right \|^2 (\lambda_{l_i+1}^{(i)})^{-1}\Big)^{-1}.
	\end{equation*}
	\end{theorem}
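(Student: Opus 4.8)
The plan is to run the standard energy-orthogonality argument for adaptive enrichment, exploiting that the online basis functions are Riesz (energy) representatives of the local residual functionals. Write $\mathcal{A}(v,w)=\int_D a\,(\nabla\times v)(\nabla\times w)+b\,v\cdot w$ for the energy bilinear form, so $\mathcal{A}(v,v)=\|v\|_{H(\text{curl})(a,b;D)}^2$, and let $R^m$ be the global residual functional $R^m(v)=\mathcal{A}(u_{\text{ms}}^m,v)-\int_D f\cdot v$, which agrees on each $V_\Omega$ with the localized residual $R_\Omega^m$ since the integrand is supported in $\Omega$.

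First I would establish the energy-drop identity. Because $V_{\text{ms}}^{m+1}\subset V_{\text{snap}}$ and both $u_{\text{snap}}$ and $u_{\text{ms}}^{m+1}$ are Galerkin projections (in $V_{\text{snap}}$, resp.\ $V_{\text{ms}}^{m+1}$), Galerkin orthogonality gives $\mathcal{A}(u_{\text{snap}}-u_{\text{ms}}^{m+1},v)=0$ for all $v\in V_{\text{ms}}^{m+1}$; taking $v=u_{\text{ms}}^{m+1}-u_{\text{ms}}^m\in V_{\text{ms}}^{m+1}$ (as $V_{\text{ms}}^m\subset V_{\text{ms}}^{m+1}$) yields the Pythagorean identity $\|u_{\text{snap}}-u_{\text{ms}}^m\|_{H(\text{curl})(a,b;D)}^2=\|u_{\text{snap}}-u_{\text{ms}}^{m+1}\|_{H(\text{curl})(a,b;D)}^2+\|u_{\text{ms}}^{m+1}-u_{\text{ms}}^m\|_{H(\text{curl})(a,b;D)}^2$. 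Hence the first inequality is equivalent to the lower bound $\|u_{\text{ms}}^{m+1}-u_{\text{ms}}^m\|_{H(\text{curl})(a,b;D)}^2\ge\sum_{j=1}^{J^m}\|R_{\Omega_j^m}^m\|^2$. To get this, set $\Phi^m=\sum_{j=1}^{J^m}\phi_j^m\in V_{\text{ms}}^{m+1}$. Since the $\Omega_j^m$ are non-overlapping and $\phi_j^m$ is supported in $\Omega_j^m$, the cross terms vanish, so $\|\Phi^m\|_{H(\text{curl})(a,b;D)}^2=\sum_j\|\phi_j^m\|_{H(\text{curl})(a,b;\Omega_j^m)}^2=\sum_j\|R_{\Omega_j^m}^m\|^2$ (using the Riesz identity noted in Step 3), and likewise $R^m(\Phi^m)=\sum_j R_{\Omega_j^m}^m(\phi_j^m)=\sum_j\|\phi_j^m\|_{H(\text{curl})(a,b;\Omega_j^m)}^2=\|\Phi^m\|_{H(\text{curl})(a,b;D)}^2$. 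On the other hand $w:=u_{\text{ms}}^{m+1}-u_{\text{ms}}^m$ solves $\mathcal{A}(w,v)=-R^m(v)$ for all $v\in V_{\text{ms}}^{m+1}$, so $\|w\|_{H(\text{curl})(a,b;D)}=\sup_{v\in V_{\text{ms}}^{m+1}}|R^m(v)|\,/\,\|v\|_{H(\text{curl})(a,b;D)}\ge R^m(\Phi^m)/\|\Phi^m\|_{H(\text{curl})(a,b;D)}=\|\Phi^m\|_{H(\text{curl})(a,b;D)}$, which is exactly the claimed lower bound.

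For the second assertion I would combine the first inequality with the a posteriori bound of Theorem \ref{th 2}, applied to $u_{\text{ms}}^m$: $\|u_{\text{snap}}-u_{\text{ms}}^m\|_{H(\text{curl})(a,b;D)}^2\le C_{err}\sum_{i=1}^{N_e}\|R_i\|^2(\lambda_{l_i+1}^{(i)})^{-1}$. Its proof uses only that the multiscale space contains every local offline space $V_{\text{ms}}^{(i)}$, which still holds here since the online step merely enlarges the initial offline space. Dividing $\sum_j\|R_{\Omega_j^m}^m\|^2$ by the right-hand side defines $E$, so $\sum_j\|R_{\Omega_j^m}^m\|^2\ge E\,\|u_{\text{snap}}-u_{\text{ms}}^m\|_{H(\text{curl})(a,b;D)}^2$; substituting this into the first inequality gives $\|u_{\text{snap}}-u_{\text{ms}}^{m+1}\|_{H(\text{curl})(a,b;D)}^2\le(1-E)\|u_{\text{snap}}-u_{\text{ms}}^m\|_{H(\text{curl})(a,b;D)}^2$.

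The only nonroutine point is the lower bound $\|w\|^2\ge\sum_j\|R_{\Omega_j^m}^m\|^2$: it requires using three things simultaneously — that $\Phi^m$ is an admissible test function in $V_{\text{ms}}^{m+1}$; the Riesz characterization of $w$ and of each $\phi_j^m$ as energy representatives of the corresponding residual functionals; and the non-overlapping property of the $\Omega_j^m$, which both kills the cross terms in $\|\Phi^m\|^2$ and identifies $R^m(\Phi^m)$ with $\|\Phi^m\|^2$. Everything else is Galerkin orthogonality together with the previously proved a posteriori estimate.
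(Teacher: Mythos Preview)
Your argument is correct and uses the same ingredients as the paper's proof: Galerkin orthogonality, the non-overlapping property of the $\Omega_j^m$, and the Riesz identity $\|\phi_j^m\|=\|R_{\Omega_j^m}^m\|$. The only cosmetic difference is that the paper plugs $u_{\text{ms}}^m-\Phi^m$ directly into the best-approximation inequality and expands, whereas you first isolate $\|u_{\text{ms}}^{m+1}-u_{\text{ms}}^m\|^2$ via the Pythagorean identity and then bound it below using the dual-norm characterization; the second part via Theorem~\ref{th 2} is handled identically.
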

\begin{proof}
	For any $v\in V_{\text{ms}}^{m+1}$,
	\begin{align*}
	&\left \| u_{\text{snap}}-u_{\text{ms}}^{m+1}+v \right \|_{H(\text{curl})(a,b;D)}^2\\
	=&\left \| u_{\text{snap}}-u_{\text{ms}}^{m+1} \right \|_{H(\text{curl})(a,b;D)}^2+\left \| v \right \|_{H(\text{curl})(a,b;D)}^2\\
	&+2\int_D a\,(\nabla\times(u_{\text{snap}}-u_{\text{ms}}^{m+1}))\cdot(\nabla\times v)+b\,(u_{\text{snap}}-u_{\text{ms}}^{m+1})\cdot v\\
	=&\left \| u_{\text{snap}}-u_{\text{ms}}^{m+1} \right \|_{H(\text{curl})(a,b;D)}^2+\left \| v \right \|_{H(\text{curl})(a,b;D)}^2 \\
	\geq &\left \| u_{\text{snap}}-u_{\text{ms}}^{m+1} \right \|_{H(\text{curl})(a,b;D)}^2.
	\end{align*}
	Let $v=u_{\text{ms}}^{m+1}-u_{\text{ms}}^m+\phi_1^m+...+\phi_{J^m}^m\in V_{\text{ms}}^{m+1}$. Then we have 
	\begin{align*}
	&\left \| u_{\text{snap}}-u_{\text{ms}}^{m+1} \right \|_{H(\text{curl})(a,b;D)}^2\\
	\leq&\left \| u_{\text{snap}}-u_{\text{ms}}^{m}+\phi_1^m+...+\phi_{J^m}^m \right \|_{H(\text{curl})(a,b;D)}^2\\
	=&\left \| u_{\text{snap}}-u_{\text{ms}}^{m} \right \|_{H(\text{curl})(a,b;D)}^2+\left \| \phi_1^m+...+\phi_{J^m}^m \right \|_{H(\text{curl})(a,b;D)}^2\\
	&+2\int_D a\,(\nabla\times(u_{\text{snap}}-u_{\text{ms}}^{m}))\cdot(\nabla\times (\phi_1^m+...+\phi_{J^m}^m))+b\,(u_{\text{snap}}-u_{\text{ms}}^{m})\cdot (\phi_1^m+...+\phi_{J^m}^m)\\
	=&\left \| u_{\text{snap}}-u_{\text{ms}}^{m} \right \|_{H(\text{curl})(a,b;D)}^2+\left \| \phi_1^m+...+\phi_{J^m}^m \right \|_{H(\text{curl})(a,b;D)}^2\\
	&-2\int_D a\,(\nabla\times u_{\text{ms}}^{m})\cdot(\nabla\times (\phi_1^m+...+\phi_{J^m}^m))+b\,u_{\text{ms}}^{m}\cdot (\phi_1^m+...+\phi_{J^m}^m)-f\cdot (\phi_1^m+...+\phi_{J^m}^m).
	\end{align*}
	Using the definition of the residual $R_{\Omega_j^m}^m$, we have
	\begin{align*}
	&\left \| u_{\text{snap}}-u_{\text{ms}}^{m+1}+v \right \|_{H(\text{curl})(a,b;D)}^2\\
	=&\left \| u_{\text{snap}}-u_{\text{ms}}^{m} \right \|_{H(\text{curl})(a,b;D)}^2+\sum_{j=1}^{J^m}\left \| \phi_j^m \right \|_{H(\text{curl})(a,b;D)}^2-2\sum_{j=1}^{J^m} R_{\Omega_j^m}^m (\phi_j^m)\\
	=&\left \| u_{\text{snap}}-u_{\text{ms}}^{m} \right \|_{H(\text{curl})(a,b;D)}^2+\sum_{j=1}^{J^m}\left \| \phi_j^m \right \|_{H(\text{curl})(a,b;D)}^2-2\sum_{j=1}^{J^m}\left \| \phi_j^m \right \|_{H(\text{curl})(a,b;D)}^2\\
	=&\left \| u_{\text{snap}}-u_{\text{ms}}^{m} \right \|_{H(\text{curl})(a,b;D)}^2-\sum_{j=1}^{J^m}\left \| \phi_j^m \right \|_{H(\text{curl})(a,b;D)}^2\\
	=&\left \| u_{\text{snap}}-u_{\text{ms}}^{m} \right \|_{H(\text{curl})(a,b;D)}^2-\sum_{j=1}^{J^m}\left \| R_{\Omega_j^m}^m \right \|^2.
	\end{align*}
	This completes the proof of the first part. The proof for the second part is a direct consequence of Theorem \ref{th 2}. 
\end{proof}
We remark that, at each iteration, we need to choose proper $\Omega_j$'s in order to obtain large value for the term $\sum_{j=1}^{J^m}\left \| R_{\Omega_j^m}^m \right \|^2$.
Moreover, the convergence rate of the online adaptive method is $1-E$.
We remark that one obtains faster convergence if the initial space contains eigenfunctions corresponding to small eigenvalues.

\section{Numerical Results}\label{sec:num}
In this section, we will present two numerical examples with two different source fields $f=(f_1,\,f_2)$, shown in Figure \ref{f}, defined as follows:
$$\text{Example 1: }f_1=\left\{\begin{matrix}
100, & 0.1<y<0.2,\\ 
10000, & 0.4<y<0.45,\\ 
1, & \text{otherwise}; 
\end{matrix}\right.
\quad f_2=\left\{\begin{matrix}
-200, & 0.2<x<0.25,\\ 
1500, & 0.65<x<0.75,\\ 
5, & \text{otherwise}. 
\end{matrix}\right.$$
$$\text{Example 2: }f_1=\left\{\begin{matrix}
10, & x-y\leq -0.6,\\ 
-2, & x-y\geq 0.6,\\ 
200, & x+y\leq 0.4,\\ 
100, & x+y\geq 1.6,\\ 
0, & \text{otherwise}; 
\end{matrix}\right.
\quad f_2=\left\{\begin{matrix}
10, & x-y\leq -0.6,\\ 
-2, & x-y\geq 0.6,\\ 
-200, & x+y\leq 0.4,\\ 
-100, & x+y\geq 1.6,\\ 
0, & \text{otherwise}. 
\end{matrix}\right.$$

\begin{figure}[ht]
	\begin{minipage}[t]{0.5\textwidth}
		\centering
		\includegraphics[width=3in]{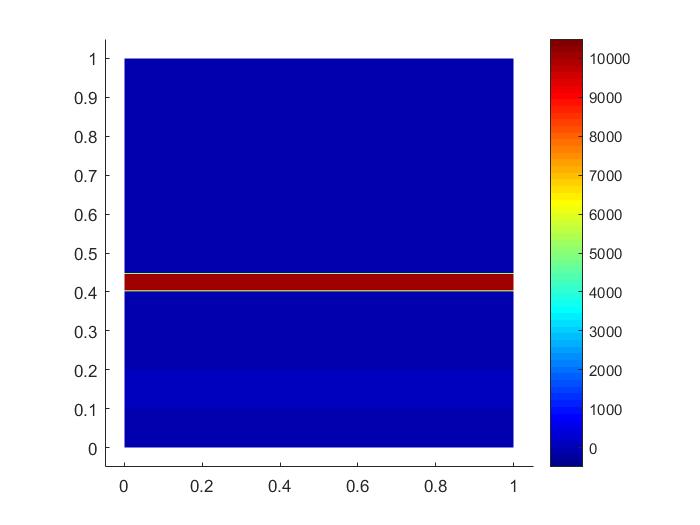}
	\end{minipage}%
	\begin{minipage}[t]{0.5\textwidth}
		\centering
		\includegraphics[width=3in]{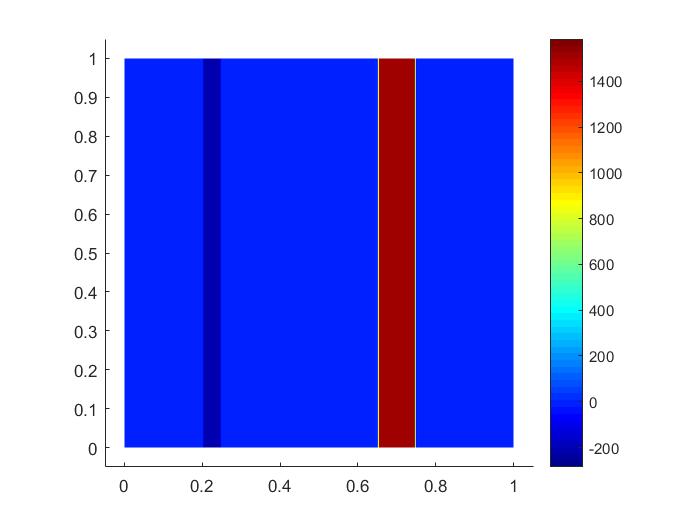}
	\end{minipage}\\
    \begin{minipage}[t]{0.5\textwidth}
    	\centering
    	\includegraphics[width=3in]{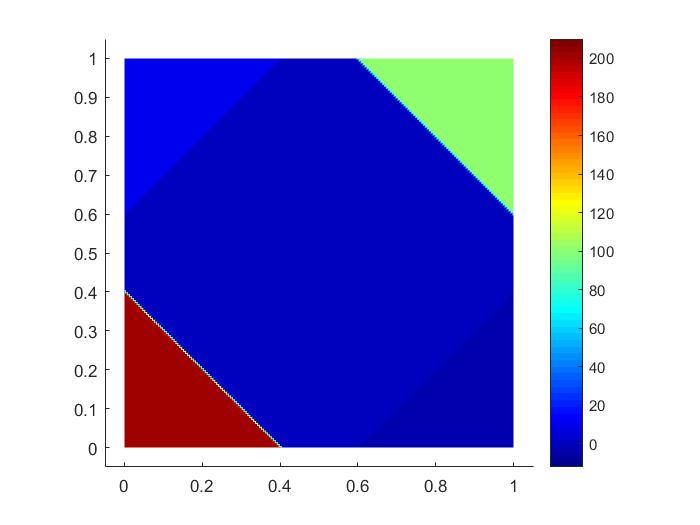}
    \end{minipage}%
    \begin{minipage}[t]{0.5\textwidth}
    	\centering
    	\includegraphics[width=3in]{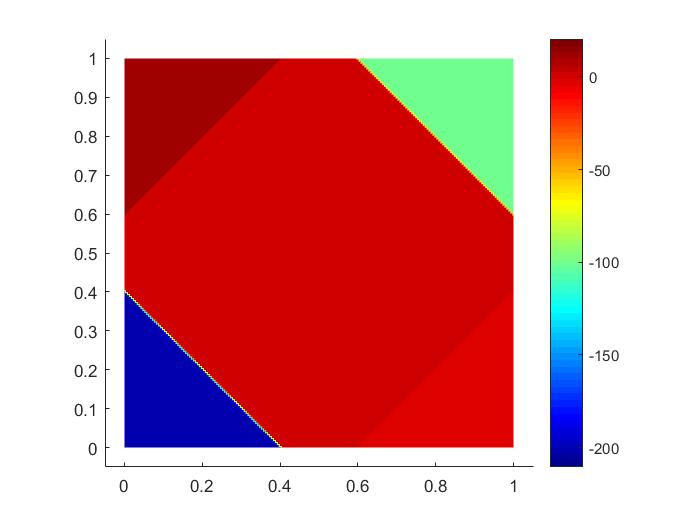}
    \end{minipage}\\
	\caption{Top: $f_1,\,f_2$ in Example 1. Bottom: $f_1,\,f_2$ in Example 2.}
	\label{f}
\end{figure}

In our simulations, we set $b=1$. The space domain $D$ is taken as the unit square $[0,1]\times[0,1]$ and is divided into $200\times200$ fine elements consisting of uniform squares. 
To measure the accuracy, we will use the following error quantities:
$$\widetilde{e_1}=\frac{\left \| u_\text{h}-u_\text{snap} \right \|_{H(\text{curl})(a,b;D)}}{\left \| u_\text{h} \right \|_{H(\text{curl})(a,b;D)}},$$
$$e_1=\frac{\left \| u_\text{snap}-u_\text{ms} \right \|_{H(\text{curl})(a,b;D)}}{\left \| u_\text{snap} \right \|_{H(\text{curl})(a,b;D)}},\quad\quad e_2=\frac{\left \| u_\text{snap}-u_\text{ms} \right \|_{L^2(D)}}{\left \| u_\text{snap} \right \|_{L^2(D)}}.$$

First of all, we consider the error of $u_\text{snap}$. We test different contrast values of $a$, i.e. $a=\kappa^2,\,\kappa^4,\,\kappa^6$, respectively, where $\kappa$ is shown in Figure \ref{kappa} with red denotes the value $10$ and blue denotes the value $1$.
We also test different sizes of coarse blocks, i.e. $H=0.1,\,0.05,\,0.025$.
From Table \ref{Comparison_H_kappa}, we can see that contrast values do not affect the error, and error converges in first order with respect to $H$. This verifies the conclusion in Theorem \ref{snap_error}.

\begin{figure}[ht]
	\begin{minipage}[t]{1\textwidth}
		\centering
		\includegraphics[width=3in]{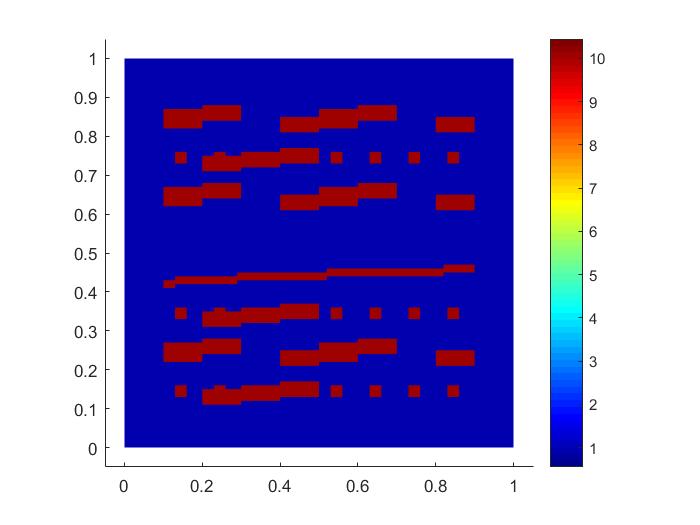}
	\end{minipage}%
	\caption{$\kappa$}
	\label{kappa}
\end{figure}

\begin{figure}[ht]
	\begin{minipage}[t]{0.5\textwidth}
		\centering
		\begin{tabular}{|c|c|c|c|}
			\hline
			\hline
			$H\; \backslash$ Contrast & 1e+2 & 1e+4 & 1e+6 \\
			\hline
			0.1  &  22.02\% & 22.02\% &  22.02\%  \\
			\hline
			0.05  &  11.74\% & 11.71\% &  11.71\%  \\
			\hline
			0.025  &  5.73\% & 5.70\% &  5.70\% \\
			\hline
		\end{tabular}
	\end{minipage}%
	\begin{minipage}[t]{0.5\textwidth}
		\centering
		\begin{tabular}{|c|c|c|c|}
			\hline
			\hline
			$H\; \backslash$ Contrast & 1e+2 & 1e+4 & 1e+6 \\
			\hline
			0.1  &  23.93\% & 23.92\% &  23.92\%  \\
			\hline
			0.05  &  11.90\% & 11.89\% &  11.89\%  \\
			\hline
			0.025  &  5.94\% & 5.94\% &  5.94\% \\
			\hline
		\end{tabular}
	\end{minipage}
	\caption{$\widetilde{e_1}$ comparison of different coarse size and different contrast values. Left: Example 1. Right: Example 2.}
    \label{Comparison_H_kappa}
\end{figure}

Next, we consider the error of $u_\text{ms}$ using offline and online adaptive methods. 
We first compare offline adaptive method with uniform enrichment. Uniform enrichment means in each enrichment level we uniformly add one eigenfunction to multiscale space from each coarse neighborhood. We fix the contrast values to be 1e+4, and coarse block size $H=0.1$. And we will keep the same setting for the future numerical analysis. For Example 1, we choose $initial\;number=1$, $\theta$=0.2, $\delta_0$=0.7. The error graph is shown in Figure \ref{offline_error_ex1}. And for Example 2, we choose $initial\;number=2$, $\theta$=0.2, $\delta_0$=0.5. The error graph is shown in Figure \ref{offline_error_ex2}. We can see the error decays faster while using offline adaptive method than using uniform enrichment.
\begin{figure}[ht]
	\centering
	\begin{minipage}[t]{0.45\textwidth}
		\centering
		\includegraphics[width=2.5in]{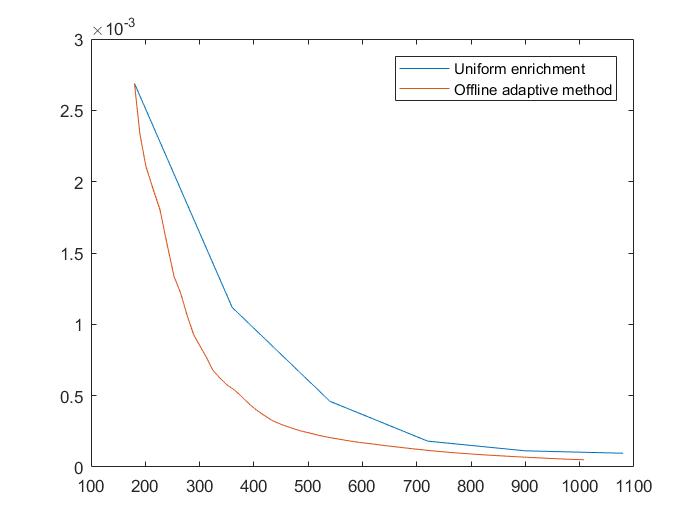}
	\end{minipage}
	\begin{minipage}[t]{0.45\textwidth}
		\centering
		\includegraphics[width=2.5in]{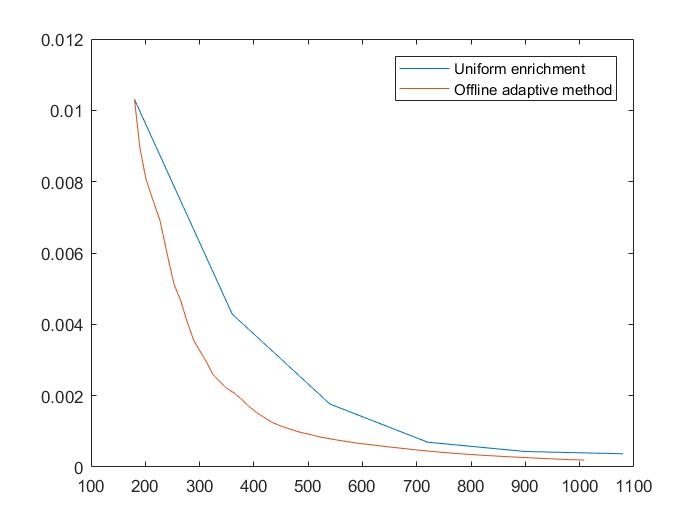}
	\end{minipage}%
	\caption{Error comparison for Example 1. Along x-axis: Dimensions of $V_\text{ms}$ (DOF). Along y-axis: Relative errors. Left: $e_1$. Right: $e_2$.}
	\label{offline_error_ex1}
\end{figure}

\begin{figure}[ht]
	\centering
	\begin{minipage}[t]{0.45\textwidth}
		\centering
		\includegraphics[width=2.5in]{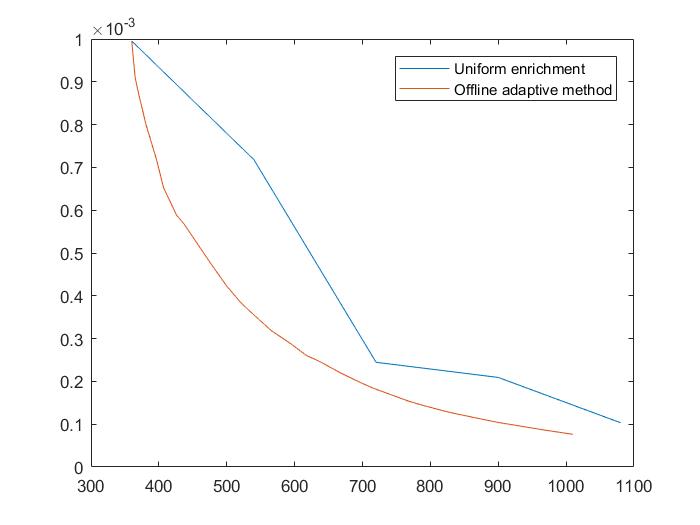}
	\end{minipage}
	\begin{minipage}[t]{0.45\textwidth}
		\centering
		\includegraphics[width=2.5in]{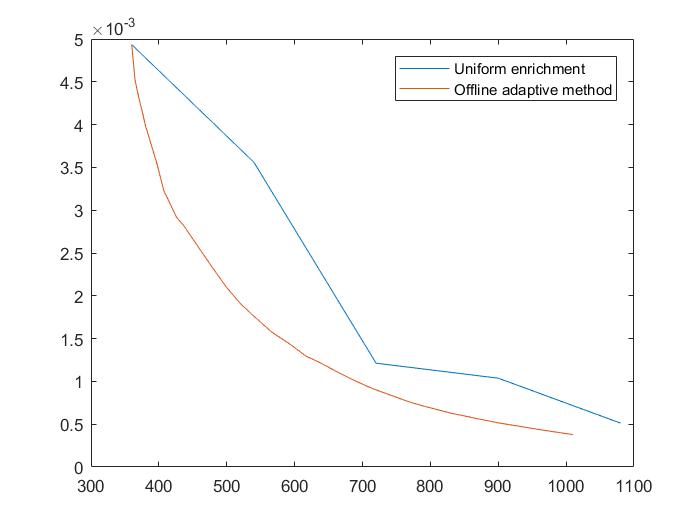}
	\end{minipage}%
	\caption{Error comparison for Example 2. Along x-axis: Dimensions of $V_\text{ms}$ (DOF). Along y-axis: Relative errors. Left: $e_1$. Right: $e_2$.}
	\label{offline_error_ex2}
\end{figure}

Next, we introduce a new algorithm which combines offline adaptive method and online adaptive method. The idea is that we first use offline adaptive method, and want to switch to online adaptive method when error decay becomes low. To achieve this, we use a user-defined criterion and the full
algorithm is shown in {\bf Algorithm 1}. 

\begin{algorithm}
	\caption{Offline-online adaptive method}
	\begin{algorithmic}
		\State Choose $total\; online\;iterations$, $initial\; number$, $0<\theta,\delta_0,percentage< 1$.
		\State Set $flag=0$, $m=0$, $J=\emptyset.$
		\While {$flag=0$}\Comment During the While loop, we use offline method.
		\If {$m=0$}
		\State In each $\omega_i$, set $l_i=initial\; number$ and take first $l_i$ eigenfunctions as multiscale basis.
		\Else

		\State For each $\omega_i$, we compute the local error indicators $\eta_i$. 
		And we find the smallest $k$, such that: if we define a subset $I$ of $\{\omega_i\}_1^{N_e}$ and the members in $I$ are just corresponding to the $k$ largest $\eta_i$, the following condition holds
		$$\sum_{\omega_i\in I}\eta_i^2\geq\theta \sum_{i=1}^{N_e}\eta_i^2.$$
		\ForAll {$\omega_i\in I$}
		\State Take the smallest $s_i$ such that $$\frac{\lambda_{l_i+1}^{(i)}}{\lambda_{l_i+s_i+1}^{(i)}}\leq\delta_0.$$
		\State Add $s_i$ more eigenfunctions to multiscale basis.
		\If {$\frac{\lambda_{l_i+s_i}^{(i)}}{\lambda_{l_i+s_i+1}^{(i)}}\geq\delta$}
		\State Add $\omega_i$ to $J$.\Comment $J$ collects the coarse neighborhoods that reach the bound.
		\EndIf
		\If {$\frac{|J|}{|\{\omega_i\}_1^{N_e}|}\geq percentage$}\Comment $|\cdot|$ represents the cardinality of a set.
		\State $flag=1.$ \Comment There are enough coarse neighborhoods reaching the bound. 
		\EndIf
		\EndFor
		\State {$l_i\leftarrow l_i+s_i$}.
		\EndIf
		\State Compute multiscale solution.			
		\State $m\leftarrow m+1.$
		\EndWhile
		\State $m=1.$
		\While {$m\leq total\; online\;iterations$}\Comment During the While loop, we use online method.
		\State Select non-overlapping regions, and add online basis function to multiscale basis.
		\State Compute multiscale solution.
		\State $m\leftarrow m+1.$
		\EndWhile
	\end{algorithmic}
\end{algorithm}	
	
For both examples, we keep the same setting of $initial\;number$, $\theta$ and $\delta_0$ as before. And we choose $\delta=0.5$, $total\; online\;iterations=4$ and $percentage=25\%$. Then the error graphs are shown in Figures \ref{offline-online_error_ex1} and \ref{offline-online_error_ex2}. We can see the point very clear when we switch to online adaptive method, and the error decay becomes much faster after that.

\begin{figure}[ht]
	\centering
	\begin{minipage}[t]{0.45\textwidth}
		\centering
		\includegraphics[width=2.5in]{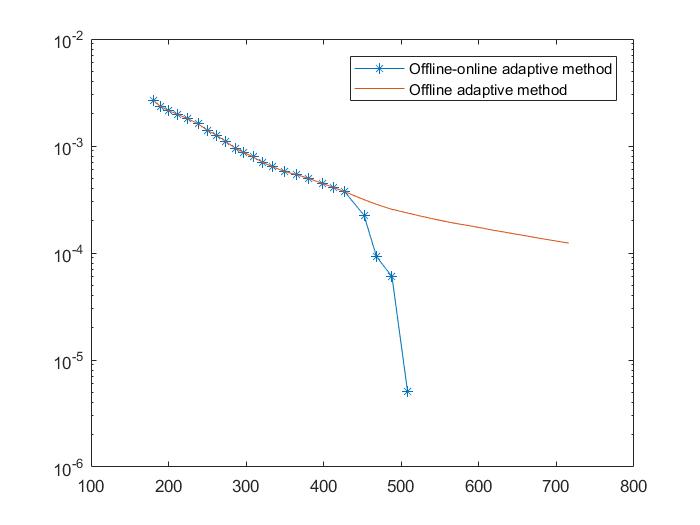}
	\end{minipage}
	\begin{minipage}[t]{0.45\textwidth}
		\centering
		\includegraphics[width=2.5in]{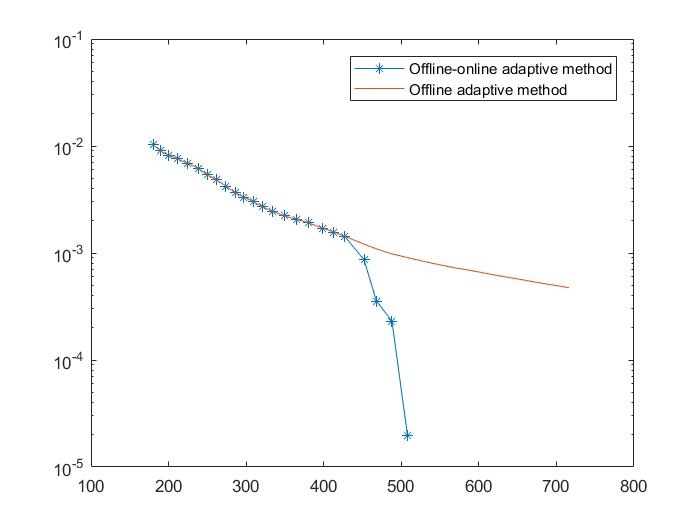}
	\end{minipage}%
	\caption{Error comparison for Example 1. Along x-axis: Dimensions of $V_\text{ms}$ (DOF). Along y-axis: Relative errors. Left: $e_1$. Right: $e_2$.}
	\label{offline-online_error_ex1}
\end{figure}

\begin{figure}[ht]
	\centering
	\begin{minipage}[t]{0.45\textwidth}
		\centering
		\includegraphics[width=2.5in]{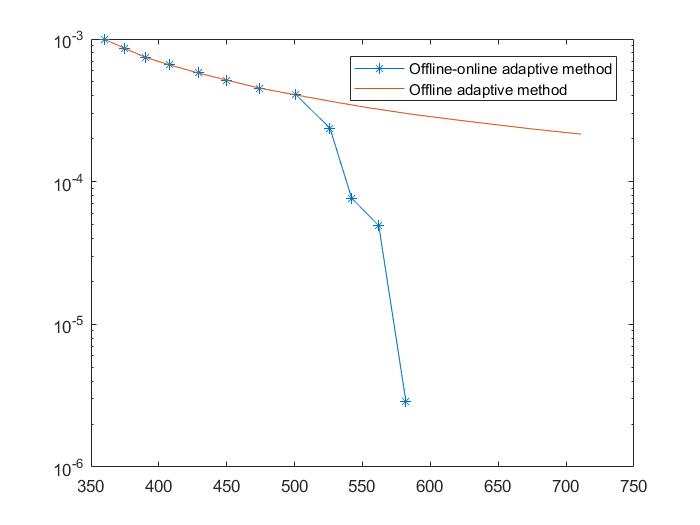}
	\end{minipage}
	\begin{minipage}[t]{0.45\textwidth}
		\centering
		\includegraphics[width=2.5in]{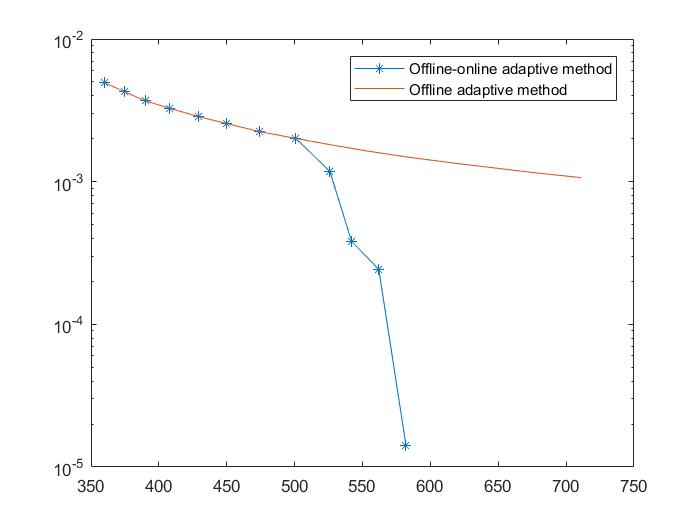}
	\end{minipage}%
	\caption{Error comparison for Example 2. Along x-axis: Dimensions of $V_\text{ms}$ (DOF). Along y-axis: Relative errors. Left: $e_1$. Right: $e_2$.}
	\label{offline-online_error_ex2}
\end{figure}

We give a final note regarding the implementation. 
In the part of online adaptive method, we will select non-overlapping regions. Here we introduce one option used in the numerical test above. We define a term "square coarse region", which represents a region consisting of $2\times 2$ coarse elements. Square coarse regions are denoted by $\omega_{i,j}$, where $i=1,\;...,\;N_x-2$ and $j=1,\;...,\;N_y-2$ and $N_x$ and $N_y$ are the number of coarse nodes in the $x$ and $y$ directions respectively. We will divide all $\omega_{i,j}$ into four group $I_1,\;I_2,\;I_3,$ and $I_4$ depending on the party of $(i,j)$. Then each group is a set of non-overlapping regions. At each level, in order to determine which group we choose, we need to compare $\sum_{\omega_{i,j}\in I_1}\left \| R_{\omega_{i,j}}^m \right \|^2$, $\sum_{\omega_{i,j}\in I_2}\left \| R_{\omega_{i,j}}^m \right \|^2$, $\sum_{\omega_{i,j}\in I_3}\left \| R_{\omega_{i,j}}^m \right \|^2$ and $\sum_{\omega_{i,j}\in I_4}\left \| R_{\omega_{i,j}}^m \right \|^2$, 
and choose the largest one.

\section{Conclusion}

In this paper, we present an adaptive multiscale method for H(curl)-elliptic problems with heterogeneous coefficients. We develop an adaptive basis enrichment procedure for the selection of basis functions. We also propose an offline-online approach, so that one can automatically use both offline and online basis functions. In addition, the convergence of both the offline and the online adaptive methods are shown, and our results indicate that the convergence is independent of the heterogeneities and the contrast of the coefficients. Finally, some numerical results are presented to validate the scheme. In the future, we plan to develop multiscale method using the constraint energy minimization approach \cite{chung2018fast,chung2017constraint}, as well as a unified approach based on the idea in this paper and the constraint energy minimization approach.

\bibliographystyle{plain}
\bibliography{reference,reference_inline}

\begin{thebibliography}{10}

\bibitem{aarnes04}
J.~E. Aarnes.
\newblock On the use of a mixed multiscale finite element method for greater
  flexibility and increased speed or improved accuracy in reservoir simulation.
\newblock {\em SIAM J. Multiscale Modeling and Simulation}, 2:421--439, 2004.

\bibitem{arbogast2004analysis}
Todd Arbogast.
\newblock Analysis of a two-scale, locally conservative subgrid upscaling for
  elliptic problems.
\newblock {\em SIAM Journal on Numerical Analysis}, 42(2):576--598, 2004.

\bibitem{chu2010new}
C-C Chu, Ivan Graham, and T-Y Hou.
\newblock A new multiscale finite element method for high-contrast elliptic
  interface problems.
\newblock {\em Mathematics of Computation}, 79(272):1915--1955, 2010.

\bibitem{chung2013sub}
E~Chung and Wing~Tat Leung.
\newblock A sub-grid structure enhanced discontinuous {G}alerkin method for
  multiscale diffusion and convection-diffusion problems.
\newblock {\em Comput. Phys}, 14:370--392, 2013.

\bibitem{chunghou2016adaptive}
Eric Chung, Yalchin Efendiev, and Thomas~Y Hou.
\newblock Adaptive multiscale model reduction with generalized multiscale
  finite element methods.
\newblock {\em Journal of Computational Physics}, 320:69--95, 2016.

\bibitem{chung2017constraint}
Eric Chung, Yalchin Efendiev, and Wing~Tat Leung.
\newblock Constraint energy minimizing generalized multiscale finite element
  method in the mixed formulation.
\newblock {\em arXiv preprint arXiv:1705.05959}, 2017.

\bibitem{chung2014GMsDGM}
Eric~T Chung, Yalchin Efendiev, and Wing~Tat Leung.
\newblock An adaptive generalized multiscale discontinuous {G}alerkin method
  ({GM}s{DGM}) for high-contrast flow problems.
\newblock 2014.
\newblock arXiv preprint arXiv:1409.3474.

\bibitem{chung2014generalized}
Eric~T Chung, Yalchin Efendiev, and Wing~Tat Leung.
\newblock Generalized multiscale finite element methods for wave propagation in
  heterogeneous media.
\newblock {\em Multiscale Modeling \& Simulation}, 12(4):1691--1721, 2014.

\bibitem{chung2015online}
Eric~T Chung, Yalchin Efendiev, and Wing~Tat Leung.
\newblock An online generalized multiscale discontinuous galerkin method
  (gmsdgm) for flows in heterogeneous media.
\newblock {\em arXiv preprint arXiv:1504.04417}, 2015.

\bibitem{chung2015residual}
Eric~T Chung, Yalchin Efendiev, and Wing~Tat Leung.
\newblock Residual-driven online generalized multiscale finite element methods.
\newblock {\em arXiv preprint arXiv:1501.04565}, 2015.

\bibitem{chung2018fast}
Eric~T Chung, Yalchin Efendiev, and Wing~Tat Leung.
\newblock Fast online generalized multiscale finite element method using
  constraint energy minimization.
\newblock {\em Journal of Computational Physics}, 355:450--463, 2018.

\bibitem{chung2014adaptive}
Eric~T Chung, Yalchin Efendiev, and Guanglian Li.
\newblock An adaptive {GM}s{FEM} for high-contrast flow problems.
\newblock {\em Journal of Computational Physics}, 273:54--76, 2014.

\bibitem{GMsFEM-elastic}
E.T. Chung, Y.~Efendiev, and S.~Fu.
\newblock Generalized multiscale finite element method for elasticity
  equations.
\newblock {\em International Journal on Geomathematics}, 5:225--254, 2014.

\bibitem{durlofsky1991numerical}
Louis~J Durlofsky.
\newblock Numerical calculation of equivalent grid block permeability tensors
  for heterogeneous porous media.
\newblock {\em Water resources research}, 27(5):699--708, 1991.

\bibitem{efendiev2013generalized}
Yalchin Efendiev, Juan Galvis, and Thomas~Y Hou.
\newblock Generalized multiscale finite element methods ({GM}s{FEM}).
\newblock {\em Journal of Computational Physics}, 251:116--135, 2013.

\bibitem{efendiev2011multiscale}
Yalchin Efendiev, Juan Galvis, and Xiao-Hui Wu.
\newblock Multiscale finite element methods for high-contrast problems using
  local spectral basis functions.
\newblock {\em Journal of Computational Physics}, 230(4):937--955, 2011.

\bibitem{efendiev2009multiscale}
Yalchin Efendiev and Thomas~Y Hou.
\newblock {\em Multiscale finite element methods: theory and applications},
  volume~4.
\newblock Springer Science \& Business Media, 2009.

\bibitem{efendiev2004multiscale}
Yalchin Efendiev, Thomas~Y Hou, Victor Ginting, et~al.
\newblock Multiscale finite element methods for nonlinear problems and their
  applications.
\newblock {\em Communications in Mathematical Sciences}, 2(4):553--589, 2004.

\bibitem{engquist2013heterogeneous}
Bj{\"o}rn Engquist and Olof Runborg.
\newblock Heterogeneous multiscale methods.
\newblock {\em Communications in Mathematical Science}, 1(1):132, 2013.

\bibitem{numerical-homo}
K.~Gao, E.T. Chung, R.~Gibson, S.~Fu, and Y.~Efendiev.
\newblock A numerical homogeneization method for heterogenous, anisotropic
  elastic media based on multiscale theory.
\newblock {\em Geophysics}, 80:D385--D401, 2015.

\bibitem{elastic-jcp}
K.~Gao, S.~Fu, R.~Gibson, E.T. Chung, and Y.~Efendiev.
\newblock Generalized multiscale finite element method ({GM}s{FEM}) for elastic
  wave propagation in heterogeneous, anisotropic media.
\newblock {\em J. Comput. Phys.}, 295:161--188, 2015.

\bibitem{ghommem2013mode}
Mehdi Ghommem, Michael Presho, Victor~M Calo, and Yalchin Efendiev.
\newblock Mode decomposition methods for flows in high-contrast porous media.
  global--local approach.
\newblock {\em Journal of Computational Physics}, 253:226--238, 2013.

\bibitem{2d-waves}
R.~Gibson, K.~Gao, E.~Chung, and Y.~Efendiev.
\newblock Multiscale modeling of acoustic wave propagation in two-dimensional
  media.
\newblock {\em Geophysics}, 79:T61--T75, 2014.

\bibitem{henning2016new}
Patrick Henning, Mario Ohlberger, and Barbara Verf\"urth.
\newblock A new heterogeneous multiscale method for time-harmonic {M}axwell's
  equations.
\newblock {\em SIAM Journal on Numerical Analysis}, 54(6):3493--3522, 2016.

\bibitem{hou1997multiscale}
Thomas~Y Hou and Xiao-Hui Wu.
\newblock A multiscale finite element method for elliptic problems in composite
  materials and porous media.
\newblock {\em Journal of computational physics}, 134(1):169--189, 1997.

\bibitem{jennylt03}
P.~Jenny, S.~H. Lee, and H.~Tchelepi.
\newblock Multi-scale finite volume method for elliptic problems in subsurface
  flow simulation.
\newblock {\em J. Comput. Phys.}, 187:47--67, 2003.

\bibitem{monk2003finite}
Peter Monk.
\newblock {\em Finite element methods for {M}axwell's equations}.
\newblock Oxford University Press, 2003.

\bibitem{nedelec1980mixed}
Jean-Claude N{\'e}d{\'e}lec.
\newblock Mixed finite elements in {R}3.
\newblock {\em Numerische Mathematik}, 35(3):315--341, 1980.

\bibitem{wu2002analysis}
Xiao-Hui Wu, Y~Efendiev, and Thomas~Y Hou.
\newblock Analysis of upscaling absolute permeability.
\newblock {\em Discrete and Continuous Dynamical Systems Series B},
  2(2):185--204, 2002.

\end{thebibliography}

\end{document}